\newcommand{\ds}{\displaystyle}
\newtheorem{remark}{Remark}[section]
\newcommand{\bigbold}[1]{{\large \textbf{#1}}}
\title{Generalized UGK Scheme in the Diffusive Limit}
\author{
  Nicolas Crouseilles\thanks{Univ Rennes, INRIA (MINGuS), CNRS, IRMAR, UMR 6625, Rennes, France 
  (\email{nicolas.crouseilles@inria.fr}).}
  \and
  Julien Mathiaud\thanks{Univ Rennes, CNRS, IRMAR, UMR 6625, Rennes, France 
  (\email{julien.mathiaud@univ-rennes.fr}).}
  \and
  Luc Mieussens\thanks{University of Bordeaux, Bordeaux INP, CNRS, IMB, UMR 5251, Talence, France 
  (\email{luc.mieussens@math.u-bordeaux.fr}).}
}
\begin{document}

\tableofcontents
  \maketitle
  \begin{abstract}
      The unified gas kinetic scheme (UGKS) was initially designed to address multiscale challenges in rarefied gas dynamics and then extended to radiative transfert theory, as described by BGK like relaxation models. In this work, we extend its application to linear kinetic models with non isotropic scattering collision operators, as well as Fokker-Planck models . These problems typically exhibit a fully diffusive nature in the optically thick limit (corresponding to a small Knudsen number). It still leads to an asymptotic preserving (AP) property not only in this diffusive regime but also in the free transport limit. A series of numerical experiments confirm the effectiveness of the approach.

  \end{abstract}
  
  \begin{keyword}
  Transport equations, diffusion limit, asymptotic preserving schemes, BGK, Fokker-Planck, scattering.
  \end{keyword}

\section*{Introduction}

\

Kinetic models play a fundamental role in describing particle system dynamics across various domains, such as rarefied gas dynamics (RGD), neutron transport, semiconductor physics, and radiative transfer.

From a numerical perspective, solving accurately these models presents significant challenges. The computational cost is driven by the necessity to capture the smallest microscopic scales, which constrain spatial discretization and, consequently, time steps for stability. The Knudsen number, denoted by $\varepsilon$, represents the ratio of the mean free path of particles to a macroscopic length scale and characterizes the transition between collision-dominated and free-streaming regimes. As $\varepsilon$
tends to zero, a global diffusive behavior emerges at the macroscopic level. However, standard numerical schemes for moment models do not necessarily recover the correct macroscopic diffusion equation in this limit so that asymptotic-preserving (AP) numerical methods have been developed, ensuring stability and consistency across different regimes \cite{jin1991,jin1993,jin2000,klar1998,buet2002diffusion,klar2001numerical,lemou2008new,bennoune2008uniformly,carrillo2008simulation,carrillo2008numerical,gosse2011transient,lafitte2012asymptotic, mieussens2013}. 

A notable AP approach is the Unified Gas Kinetic Scheme (UGKS), originally introduced by Xu and Huang \cite{kxu2010ugks} in the context of rarefied gas dynamics. UGKS leverages a finite volume framework where numerical fluxes incorporate information from the collision operator, allowing it to bridge different regimes. Instead, UGKS makes use of the relaxation form of the BGK collision operator to derive an approximation of the interface flux by using a Duhamel representation of the solution. In other words, this representation allows for an approximation of the solution of the generalized Riemann problem at the interface that accounts for collisions.  Since its conception, UGKS has been extended to more complex gas flows \cite{liu2017unified,zhu2021first} and applied to linear models with a diffusion limit \cite{mieussens2013,sun2015radiativetransfer}. A key advantage of UGKS is that it does not require decomposition of the distribution function (like the micro-macro or the odd-even decomposition), thus avoiding issues related to boundary conditions, and it operates without staggered grids, facilitating  multi-dimensional implementations.

By construction, the extension of UGKS to non relaxation operators is not obvious. Among these collision operators, we mention the original Boltzmann operator and Fokker-Planck models of Rarefied Gas Dynamics, and non isotropic scattering and Fokker-Planck models in neutron transport and radiative transfer, for instance. A first attempt \cite{sun2015radiativetransfer} was to use the relaxation technique of Filbet-Jin (\cite{Filbet2011}): the UGKS is applied to a asymptotically consistent BGK operator, while the deviation to the original collision operator is used as a source terme. However, this was proved to induce a non physical statbility constraint on the collision kernel, at least for the diffusion regime.  Another related extension was proposed by Liu et al. for rarefied gas Dynamics (\cite{Liu2016}), where the BGK model is still used for the UGKS flux, while a the collision operator is replaced by a convex combination between the Boltzmann operator and the BGK one. Here, we propose a related approach that looks more general, in which we make use of the eigenvalue of the pseudo-inverse of the collision operator to derive a new relaxation form. Combined with the UGKS approach, and various simplifications, this allows to capture the correct diffusion limit without any stability restriction, for both non isotropic scattering collision operator and the Fokker-Planck operator as well. 
 We demonstrate that UGKS offers a robust framework for AP numerical schemes in the context of the diffusive limit. By leveraging kinetic-inspired flux approximations, UGKS enables a seamless transition between kinetic and macroscopic scales, making it a promising approach for modeling various physical phenomena. 

The rest of this work is organized as follows: first, the continuous and semi-discrete (in velocity) models are presented, and their diffusion limit is discussed. In the second section, the new UGKS framework is presented and the AP property is verified. Section 3 proposes some possible extensions and links with the UGKS scheme are discussed. Then, in Section 4, some numerical results are presented to illustrate the capabilities of the new approach.   

\section{Continuous models}
In this section, we present  models at  continuous level and discretized level: some properties and the difusion limit are given. 

\phantomsection

\subsection{ Linear collision operators with continuous velocities}

We consider the equation on the particle density function $f(t,x,v)$ which depends on time $t\geq 0$, position $x\in [0, L]\subset \mathbb{R}$ ($L>0$) and velocity $v\in[-c,c]\subset \mathbb{R}$ (with $c$ a typical velocity  of the phenomena):
\begin{equation}
\eta \partial_t f + v \partial_x f = \frac{\sigma}{\varepsilon} {\cal D} f,
\label{eq:boltzmann}
\end{equation}
$\varepsilon$ and $\eta$ being two parameters that will vary according to limit one considers. $\sigma$ is the characteristic collision frequency. The collision operator ${\cal D}$ can be  given by:
\begin{equation}
 {\cal D}f(v) := \int_{-c}^{c} k(v, v')(f(v') - f(v)) \, dv', 
\label{eq:collision_operator_scat}
\end{equation}
or
\begin{equation}
 {\cal D}f(v) = \partial_v \Big( (c^2-v^2) \partial_v f\Big). 
\label{eq:collision_operator_diff}
\end{equation}
The collisional kernels in  \eqref{eq:collision_operator_scat} and \eqref{eq:collision_operator_diff} respectively correspond to a simple scattering model (including the BGK model for $k=1$) and a Fokker-Planck-like model projected in the $v$ direction. The function $k: (v,v')\rightarrow k(v,v')$ is the probability  of passing from state $v$ to state $v'$ and is symmetric so that $\langle k(v,\cdot ),1\rangle =1, \; \forall v\in [-c, c]$, {with $\langle f,g\rangle  = \int_{-c}^c f(v) g(v) dv$ for two $v$-dependent functions $f, g$}. 

It is well known (see \cite{lemou2008new, buet2002diffusion, gosse2011transient, klar1998, jin1993}) that in the diffusion limit, ie  $\eta=\varepsilon$ tends to zero, the function $f(t, x, v)$ matches the Chapman-Enskog expansion 
\begin{equation}
\label{chapman_continuous}
f(t, x, v) = \rho(t, x)+ \frac{\varepsilon}{\sigma} {\cal D}^{+}(v) \partial_x \rho(t, x) + {\cal O}(\varepsilon^2),  
\end{equation}
where ${\cal D}^{+}$ denotes the pseudo-inverse of ${\cal D}$ and the density $\rho(t, x)=\int_{-c}^{c} f(t, x, v)dv$ satisfies the following diffusion equation:
\begin{equation}
\partial_t \rho + \partial_x (\kappa \partial_x \rho) = 0,
\label{eq:diffusion_limit}
\end{equation}
where $\kappa = \langle v,  {\cal D}^{+} v \rangle$.  The operator ${\cal D}^+$ is defined for functions with zero mean by the following condition: for any function $\phi$ such that $\langle \phi \rangle = 0$, the function $\psi =  {\cal D}^{+} \phi$ is the unique solution of $ {\cal D} \psi = \phi$ such that {$ \langle \psi, 1 \rangle = 0 $}. 

    \subsection{Continuous Problem with Discrete Velocities}
    
   \
    
    We denote $F_j(t, x) = F(t, x, v_j)$ for $j = 1, \dots, 2N$, where $v_j$ is a symmetric regular velocity grid with step size $\ds \Delta v =\frac{2c}{2N}= \frac{c}{N}$ , with $\ds v_j = -c + \frac{\Delta v}{2} + (j-1)\Delta v$ that ranges from $-c$ to $c$,  and $F(t, x) \in \mathbb{R}^{2N}$ is the solution of the following equation:
    \begin{equation}
    \label{eq:semidiscrete}
    \displaystyle
        \partial_t F_j + \frac{1}{\eta} v_j \partial_x F_j = \frac{\sigma}{\eta\varepsilon} \langle e_j, D F \rangle,
    \end{equation}
    where $D \in {\cal M}_{2N,2N}(\mathbb{R})$ is a matrix discretizing a collision operator ${\cal D}$ (diffusion \eqref{eq:collision_operator_diff} or scattering \eqref{eq:collision_operator_scat}), $e_j$ is the $j$-th vector of the canonical basis of $\mathbb{R}^{2N}$, and for $U,W\in \mathbb{R}^{2N}$ we denote $\langle U,W\rangle =\sum_{k=1}^{2N} U_k W_k$ {(let us remark the same notation as in the continuous case is used)}. Let also introduce the notation $V\in \mathbb{R}^{2N}$ which is the vector of discrete velocities 
    \begin{equation}
        \label{def:V}
        V = \begin{pmatrix} v_1 \ \hdots \ v_{2N} \end{pmatrix}^T \in \mathbb{R}^{2N},  
    \end{equation}
which satisfies  $\sum_{k=1}^{2N}v_k=0$. 

From now on we choose {$\bf c=1$} so that {$\bf \Delta v=\frac1N$}.
    
First of all, we recall some useful properties for the matrix $D$  that are common to discretization of operators like  \eqref{eq:collision_operator_diff} and \eqref{eq:collision_operator_scat} (see subsections \ref{subsecfp}  and \ref{subsecscat} for examples of such discretization matrices). We  assume the matrix $D$ to be symmetric, negative, and its kernel is
Ker $D=$ Span$(\bigbold{1})$, with 
\begin{equation}
\label{def-1}
\bigbold{1} = \begin{pmatrix} 1 \ \hdots \ 1 \end{pmatrix} \in \mathbb{R}^{2N}.  
\end{equation}
It implies that $\forall i, \;\;  \sum_{j=1}^{2N}D_{i,j}=\sum_{j=1}^{2N} D_{j,i}=0$.

\

Moreover, we introduce the following notations:
    \begin{itemize}
    \item $\lambda_k, k=0, \dots, N'$ the eigenvalues of $D$, with $N'\leq 2N-1$ and $\lambda_0=0$. 
      \item $P_0$ the orthogonal projector onto Ker $D$:   
      $P_0 = \frac{1}{2N} \begin{pmatrix} 1 & \ldots & 1 \\ \vdots & & \vdots \\ 1 & \ldots & 1 \end{pmatrix}$, associated with the eigenvalue $\lambda_0=0$, 
      \item $P_1, \dots, P_{N'}$ the orthogonal projectors onto the other eigenspaces ($N' \leq 2N-1$) associated with eigenvalues $\lambda_1, \dots, \lambda_{N'}$.
       \end{itemize}

\medskip       
\medskip       
       
 We also assume that the matrix $D$  satisfies:
 \begin{itemize}
     \item A positiveness condition: \begin{eqnarray}
D_{i,j} \geq 0 \mbox{ for } i \neq j
\end{eqnarray} .
\item  $D$ is somehow "irreducible": there exists $\delta_0>0$ such that $\forall \delta \mbox{ such that }  0\leq \delta\leq \delta_0$,  $I+\delta D$ is an irreducible bistochastic matrix: it implies that there is a sequence $k_0,.....,k_n$ of integers between $1$ and $2N$ that contains all the integers between $1$ and $2N$ such that $2$ consecutive terms are different ($k_j\ne k_{j+1}$) and $D_{k_j,k_{j+1}}> 0$
 \end{itemize}

\

Then, we recall some useful properties in the following proposition that can be proved using standard linear algebra tools for symmetric matrices. 
    \begin{proposition}    \label{prop1}
    Let consider a matrix $D\in {\cal M}_{2N, 2N}(\mathbb{R})$ symmetric, negative, and such that Ker $D=$ Span$(\bigbold{1})$ with $\bigbold{1}$ defined in \eqref{def-1}. Then, we have  
    \begin{itemize}
    \item the orthogonal projectors $P_k$ associated with the eigenvalues $\lambda_k$ of $D$ satisfy 
    $$
    \sum_{k=0}^{N'} P_k = I,\qquad \;\; 
    \ds \sum_{k=0}^{N'} {\lambda_k} P_k=\sum_{k=1}^{N'} {\lambda_k} P_k = D, \;\;\,   \mbox{ and } \;\; P_kP_j=0 \mbox{ if } k \ne j, \;\; N'\leq 2N-1.
     $$
    \item the pseudo-inverse $D^{+}$ satisfies 
    $$
    D^{+}:= \sum_{k=1}^{N'} \frac{1}{\lambda_k} P_k
   \;
     \mbox{ and } \;\;
     D^{+}D= DD^{+} = I - P_0.
$$
    \item there exists a unique $U\in \mathbb{R}^{2N}$ such that $\langle U, \bigbold{1} \rangle = 0$ and $\ds DU = V$,  being given by \eqref{def:V}. We  define  $\lambda_\star$ as:
    \begin{equation}
    \label{lambdastar}
    \ds \lambda_\star: = \frac{\langle V, V \rangle}{\langle D^{+} V, V \rangle} = \frac{\langle DU, DU \rangle}{\langle U, DU \rangle}.
    \end{equation}
    the pseudo-eigenvalue. We have $U = V / \lambda_\star$  when $DV=\lambda V$.  
    \end{itemize}
    \end{proposition}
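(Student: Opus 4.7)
The plan is to read everything off the spectral theorem for the symmetric matrix $D$ and then verify the formulas for $D^+$ and $U$ by direct computation. Since $D$ is symmetric, $\mathbb{R}^{2N}$ decomposes orthogonally as $\bigoplus_{k=0}^{N'} E_k$ where $E_k = \mathrm{Ker}(D-\lambda_k I)$, and the orthogonal projectors $P_k$ onto the $E_k$ satisfy the completeness relation $\sum_k P_k = I$, the orthogonality relation $P_kP_j = 0$ for $k \ne j$, and the spectral expansion $D = \sum_k \lambda_k P_k$. Because $\lambda_0 = 0$, the $k=0$ term drops from the sum for $D$, giving the second equality of the first bullet.

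For the second bullet, I would \emph{define} $D^+ := \sum_{k=1}^{N'} \lambda_k^{-1} P_k$ and check $D^+D = DD^+ = \sum_{k=1}^{N'} P_k = I - P_0$ using orthogonality of the $P_k$. To see this agrees with the definition given after \eqref{eq:diffusion_limit} (translated to the discrete setting), I would observe that for $\phi$ with $\langle \phi, \bigbold{1}\rangle = 0$ one has $P_0\phi = 0$ (since $P_0 = \frac{1}{2N}\bigbold{1}\bigbold{1}^T$), so $\psi := D^+\phi$ satisfies $D\psi = (I-P_0)\phi = \phi$ and $\langle \psi, \bigbold{1}\rangle = 0$ because $P_0 D^+ = 0$; uniqueness follows since any two such solutions differ by an element of $\mathrm{Ker}\, D = \mathrm{Span}(\bigbold{1})$ which must vanish under the zero-mean constraint.

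For the third bullet, I would use that $V$ satisfies $\sum_k v_k = 0$, i.e.\ $\langle V, \bigbold{1}\rangle = 0$, so $V \in (\mathrm{Ker}\, D)^\perp = \mathrm{Range}(D)$; setting $U := D^+ V$ then yields $DU = (I-P_0)V = V$ and $\langle U, \bigbold{1}\rangle = 0$, with uniqueness as before. The equality of the two expressions for $\lambda_\star$ is then immediate: $\langle D^+V, V\rangle = \langle U, DU\rangle$ and $\langle V,V\rangle = \langle DU, DU\rangle$. Finally, if $DV = \lambda V$ with $\lambda \ne 0$, then $V/\lambda$ solves $DU = V$ and has zero mean (since $V \perp \bigbold{1}$), so by uniqueness $U = V/\lambda$; plugging into the Rayleigh-type formula gives $\lambda_\star = \lambda$, and hence $U = V/\lambda_\star$.

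No single step is a real obstacle; the whole proposition is a direct application of the spectral theorem for a real symmetric matrix together with the characterization $\mathrm{Range}(D) = (\mathrm{Ker}\, D)^\perp$. The only point requiring mild care is checking that the abstract pseudo-inverse defined by the uniqueness property in the text coincides with the concrete expression $\sum_{k\ge 1} \lambda_k^{-1} P_k$, but this follows from the decomposition $\phi = P_0\phi + \sum_{k\ge 1} P_k\phi$ and the orthogonality of the projectors.
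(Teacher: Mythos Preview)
Your proposal is correct and follows exactly the approach the paper indicates: the paper does not give a detailed proof of this proposition, stating only that it ``can be proved using standard linear algebra tools for symmetric matrices,'' which is precisely the spectral-theorem argument you outline. Your treatment is in fact more detailed than what the paper provides, including the verification that the spectral formula for $D^+$ matches the functional characterization given earlier in the text.
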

 
    \medskip
    
    \begin{remark}
    Let us perform some remarks regarding Proposition \ref{prop1} 
    The pseudo-eigenvalue $\lambda_\star$ is always negative since $D$ is negative. For the BGK operator, we have $\lambda_\star = -1$ whereas for the Fokker-Planck operator, we have $\lambda_\star = -2$, $V$ being an eigenvector in both cases. Besides, last item leads to the second principle on the  semi-discrete equation \eqref{eq:semidiscrete}. 
    \end{remark}

    \subsection{Generic Properties of the discrete model and its Diffusive limit} In view of deriving a suitable Duhamel formula, we introduce a $\lambda_\star$-relaxation term in \eqref{eq:semidiscrete} to get the equivalent following form (let recall that $\lambda_\star$ is  negative) : in other words, we rewrite $D$ as $D=(D-\lambda_\star I) + \lambda_\star I$ so that the discrete equation is
    \begin{equation}
    \label{RTEdisc}
        \partial_t F_j + \frac{1}{\eta} v_j \partial_x F_j = \langle e_j, \frac{\sigma}{\eta\varepsilon} \left[ (D - \lambda_\star I) F + \lambda_\star F \right] \rangle. 
    \end{equation}
 Then, from \eqref{RTEdisc}, the relaxation term $\lambda_\star F$ is used to write   a Duhamel-type formula 
    \begin{eqnarray*}
        F_j(t,x) &=& \exp\left({\frac{\lambda_\star \sigma t}{\eta \varepsilon}}\right) F_j\Big({0}, x - \frac{v_j t}{\eta}\Big) \nonumber\\
        &&+ \langle e_j, \int_0^t \frac{\sigma}{\eta\varepsilon} \exp\left({\frac{\lambda_\star \sigma}{\eta\varepsilon}(t - t')}\right) (D - \lambda_\star I) F\left(t', x + \frac{v_j (t' - t)}{\eta}\right) dt' \rangle.  \nonumber    
        \end{eqnarray*}
We then perform the change of variable $u = \frac{\sigma(t - t')}{\eta\varepsilon}$ 
in the integral term to get 
\begin{eqnarray}
        F_j(t,x) &=& \exp\left({\frac{\lambda_\star \sigma t}{\eta \varepsilon}}\right) F_j\Big({0}, x - \frac{v_j t}{\eta}\Big) \nonumber\\
        &&+ \langle e_j, \int_0^{\frac{\sigma t}{\eta\varepsilon}} \exp\left({\lambda_\star u}\right) (D - \lambda_\star I) F\left(t - \frac{\eta\varepsilon u}{\sigma}, x - \frac{v_j \varepsilon u}{\sigma}\right) du \rangle. 
            \label{duhamel}
    \end{eqnarray}
From the latter expression, we can formally derive a Chapman-Enskog expansion for $F_j$ in the diffusion limit thanks to the second principle:  
\begin{proposition}[Second principle]
\label{prop12}
The following equivalences hold:
\begin{enumerate}
\item $DF=0$,
\item $F=\rho\bigbold{1}$, {with $\rho=\sum_{j=1}^{2N}F_j \Delta v$.}
\item $\langle DF,\ln(F)\rangle=0$.
\end{enumerate}
\end{proposition}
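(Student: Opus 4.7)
The plan is to handle the three equivalences as two easy and one substantive implication. First, $(1)\Leftrightarrow(2)$ is immediate from the standing assumption $\mathrm{Ker}\,D = \mathrm{Span}(\bigbold{1})$: $DF=0$ forces $F$ to be a constant vector, which up to identifying the constant with the discrete density $\rho=\sum_j F_j \Delta v$ is exactly (2), and the reverse implication is just $D\bigbold{1}=0$. The implication $(2)\Rightarrow(3)$ is also trivial because $F\in\ker D$ then makes $DF=0$ and hence $\langle DF,\ln F\rangle=0$. So the real content lies in $(3)\Rightarrow(1)$, and I would implicitly work under $F>0$ so that $\ln F$ is defined componentwise.

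For that direction, I would rewrite $\langle DF,\ln F\rangle$ in a manifestly signed form. Starting from $\sum_{i,j} D_{ij} F_j \ln F_i$ and subtracting the identically zero quantity $\sum_{i,j} D_{ij} F_i \ln F_i$ (which vanishes because the rows of $D$ sum to zero), I obtain $\sum_{i,j} D_{ij}(F_j-F_i)\ln F_i$. Symmetrizing via $D_{ij}=D_{ji}$ then yields the standard dissipation identity
\begin{equation*}
\langle DF,\ln F\rangle = -\frac{1}{2}\sum_{i,j} D_{ij}(F_j - F_i)(\ln F_j - \ln F_i).
\end{equation*}
Since $D_{ij}\geq 0$ for $i\neq j$ and $(a-b)(\ln a-\ln b)\geq 0$ for positive reals with equality iff $a=b$, each term of the double sum is non-negative; this both establishes $\langle DF,\ln F\rangle\leq 0$ and shows that the vanishing in (3) forces $F_i=F_j$ whenever $D_{ij}>0$.

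To upgrade these pairwise equalities into global constancy of $F$, I would invoke the irreducibility hypothesis: there is a sequence $k_0,k_1,\dots,k_n$ exhausting $\{1,\dots,2N\}$ with $D_{k_\ell,k_{\ell+1}}>0$ at each step, so chaining $F_{k_0}=F_{k_1}=\cdots=F_{k_n}$ forces $F$ to be proportional to $\bigbold{1}$, giving (2), and applying $D$ then recovers (1). The principal obstacle is precisely this passage from \emph{``$F_i=F_j$ only along edges with $D_{ij}>0$''} to \emph{``$F$ is globally constant''}: this is exactly what the irreducibility assumption supplies; without it, one would only conclude that $F$ is constant on each connected component of the graph $\{(i,j):D_{ij}>0\}$. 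The positivity $F>0$ is a second mild prerequisite, natural for distribution functions, needed simply so that the log-entropy inequality can be applied.
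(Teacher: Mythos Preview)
Your proof is correct and follows essentially the same route as the paper: both derive the symmetrized dissipation identity $\langle DF,\ln F\rangle=-\tfrac12\sum_{i,j}D_{ij}(F_j-F_i)(\ln F_j-\ln F_i)$ (the paper writes it as $\tfrac12\sum_{i\neq j}(F_i-F_j)D_{ij}\ln(F_j/F_i)$, which is the same expression), use the sign of $(a-b)(\ln a-\ln b)$ together with $D_{ij}\ge 0$ off-diagonal to force $F_i=F_j$ on the edges $D_{ij}>0$, and then invoke the irreducibility chain to propagate constancy globally. Your explicit mention of the positivity assumption $F>0$ is a welcome clarification that the paper leaves implicit.
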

 \begin{proof}
  Using the properties of $D$, the two first assertions are equivalent. The first assertion clearly leads to the third one. It remains to prove that last assertion implies the second one. To do so, let us expand the scalar product: {$\langle DF,\ln(F)\rangle=\langle F,D\ln(F)\rangle$}
  \begin{eqnarray*}
   \langle DF,\ln(F)\rangle&=& \sum_{1\leq i,j\leq 2N} F_i D_{i,j}\ln(F_j) 
  \end{eqnarray*}
  Since $D\bigbold{1}=0$   and  $D$ is symmetric, one gets that $\forall i, D_{i,i}=-\sum_{j\neq i}D_{i,j}=-\sum_{j\neq i}D_{j,i}$ so that 
   \begin{eqnarray*}
   \langle DF,\ln(F)\rangle&=& \sum_{1\leq i\neq j\leq2N} F_i D_{i,j}\ln(F_j) -\sum_{1\leq i\neq j\leq 2N} F_i D_{i,j}\ln(F_i)\\
   &=& \sum_{1\leq i\neq j\leq2N} F_i D_{i,j}\ln(F_j/F_i)\\
     &=& {\frac12 \sum_{1\leq i\neq j\leq2N} F_i D_{i,j}\ln(F_j/F_i) + \frac12 \sum_{1\leq i\neq j\leq2N} F_j D_{i,j}\ln(F_i/F_j)}\\
   &=& \frac12\sum_{1\leq i\neq j\leq2N} (F_i-F_j) D_{i,j}\ln(F_j/F_i)
  \end{eqnarray*}
  If the scalar product $ \langle DF,\ln(F)\rangle$ is equal to zero, 
  we get that as soon as $D_{i,j}>0$ $F_i=F_j$ since $(x,y)\rightarrow (y-x)\ln(x/y)$ is a negative function which is equal to zero only when $x=y$.
  
  Since there is a sequel $k_0,.....,k_n$ of integers between $1$ and $2N$ that contains all the integers between $1$ and $2N$ such that $2$ consecutive terms are different ($k_j\ne k_{j+1}$) and $D_{k_j,k_{j+1}}> 0$, all $F_i$ are equal leading to the fact that $F=\rho\bigbold{1}$.
 \end{proof}   
    \subsection{Diffusion Limit}
    
   We now  study the diffusion limit $\varepsilon=\eta\to0$ using Duhamel formula \eqref{duhamel}.  
    
    \phantomsection
    
    \subsubsection{Zero-th Order }
    
    Passing to the limit $\varepsilon = \eta \to 0$ in \eqref{duhamel}, we obtain from the dominated convergence theorem (recalling that $\lambda_\star < 0$) 
    \begin{eqnarray}
        F_j(t,x) &\to& \langle e_j, \left( D - \lambda_\star I \right) F(t,x) \int_0^{\infty} \exp(\lambda_\star u) {du} \rangle \nonumber\\
        &&= -\frac{1}{\lambda_\star} \langle e_j, \left( D - \lambda_\star I \right) F(t,x) \rangle. 
    \end{eqnarray}
Hence,   at the zero-th order, the last relation rewrites as $F(t,x) = \left( I - \frac{1}{\lambda_\star} D \right) F(t,x)$, so that
    \begin{equation}
        D F(t,x)=0. \label{inverse}
    \end{equation}
    From the properties of $D$ (see Proposition \ref{prop12}), we deduce that{$F(t,x)=\rho(t,x) \bigbold{1}$ with $\rho(t,x) = \sum_{j=1}^{2N} F_j(t, x) \Delta v$.} 
    
    \subsubsection{First order}
    We now go to the next order 
    by performing a Taylor expansion of $F$ in \eqref{duhamel}:
    \begin{eqnarray*}
        F\left(t - \frac{\eta\varepsilon u}{\sigma}, x - \frac{v_j \varepsilon u}{\sigma}\right) &=& F(t,x) - \frac{\eta\varepsilon u}{\sigma} \frac{v_j}{\eta} \partial_x F(t,x) + O(\eta \varepsilon) \nonumber \\
        &=& F(t,x) - \frac{\varepsilon u v_j}{\sigma} \partial_x \rho(t,x) \bigbold{1} + O(\eta \varepsilon) \nonumber,
    \end{eqnarray*}
    where we used the zero-th order approximation of $F$ to set $\partial_x F(t,x) = \partial_x \rho(t,x) \bigbold{1} + {\cal O}(\varepsilon)$. Thus, the term in the integral in \eqref{duhamel} becomes (recalling that $D \bigbold{1} = 0$):
    \begin{align*}
        &\langle e_j, (D - \lambda_\star I) F\left(t - \frac{\eta\varepsilon u}{\sigma}, x - \frac{v_j \varepsilon u}{\sigma}\right) \rangle\\
        =& \langle e_j, (D - \lambda_\star I) F(t,x) \rangle - \frac{\varepsilon u}{\sigma} \partial_x \rho(t,x) \langle e_j, (D - \lambda_\star I) v_j \bigbold{1} \rangle + O(\varepsilon^2) \nonumber \\
        =& \langle e_j, (D - \lambda_\star I) F(t,x) \rangle + \frac{\varepsilon u}{\sigma} \partial_x \rho(t,x) \lambda_\star v_j + O(\varepsilon^2). \nonumber
    \end{align*}
    Multiplying the latter expression by $\exp(\lambda_\star u)$ and integrating over $u \in [0, +\infty[$,  equation \eqref{duhamel} becomes:
    \begin{multline}
        \int_0^\infty \exp(\lambda_\star u) (D - \lambda_\star I) F\left(t - \frac{\eta\varepsilon u}{\sigma}, x - \frac{v_j \varepsilon u}{\sigma}\right) du \notag=\\ \left( I - \frac{1}{\lambda_\star} D \right) F(t,x) + \frac{\varepsilon}{\lambda_\star \sigma} \partial_x \rho(t,x) V + O(\varepsilon^2),
    \end{multline}
    where we used $\int_0^{+\infty} u \exp(\lambda_\star u) du = 1/\lambda_\star^2$ and $\int_0^{+\infty} \exp(\lambda_\star u) du = -1/\lambda_\star$. Consequently, 
    since the first term in \eqref{duhamel} tends to zero exponentially fast, \eqref{duhamel} becomes, as $\varepsilon\to 0$
    \begin{eqnarray*}
        F(t,x) = \left( I - \frac{1}{\lambda_\star} D \right) F(t,x) + \frac{\varepsilon}{\lambda_\star \sigma} \partial_x \rho(t,x) V + O(\varepsilon^2), 
    \end{eqnarray*} 
    or after simplifying $F(t, x)$ on both sides and multiplying by $\lambda_\star$  
    \begin{equation}
    \label{CE_F}
       D  F(t,x) = \frac{\varepsilon}{\sigma} \partial_x \rho(t,x) V + O(\varepsilon^2).  
    \end{equation}
   Applying the pseudo-inverse $D^+$ (whose properties are recalled in Proposition \ref{prop1}) leads to 
    \begin{eqnarray}
        F(t,x) = \rho(t,x) {\bigbold{1}} + \frac{\varepsilon}{\sigma} \partial_x \rho(t,x) D^{+} V + O(\varepsilon^2),
    \end{eqnarray}
    which is indeed the discrete counterpart of the desired result \eqref{chapman_continuous}.
    
    \begin{remark}
    In \eqref{duhamel}, The Duhamel formula has been written on the time interval $[0, t]$ but it can 
also be written on the time interval $[t_n, t]$. In this case, it comes 
    \begin{align}  \label{duhamel2}
        F_j(t, x) 
        &= \exp\left(\frac{\lambda_\star \sigma (t - t_n)}{\eta \varepsilon}\right) F_j\Big(t_n, x - \frac{v_j (t - t_n)}{\eta}\Big) \nonumber\\
        &+ \langle e_j, \int_{0}^{\frac{\sigma (t - t_n)}{\eta\varepsilon}} \exp( \lambda_\star u) (D - \lambda_\star I) F\left(t - \frac{\eta\varepsilon u}{\sigma}, x - \frac{v_j \varepsilon u}{\sigma}\right) du \rangle. 
    \end{align}
    \end{remark}
    
    \section{Numerical scheme}
    In this section, we will describe 
    a generalized UGKS for linear equations  of the form \eqref{eq:semidiscrete}. 
First of all, we introduce a uniform spatial mesh $x_i = i \Delta x$ is introduced and denote $x_{i+1/2} = (x_i + x_{i+1})/2$ the interface between two cells. The spatial interval being $x\in [0, L], L>0$, the mesh step is defined as $\Delta x=L/N_x$, $N_x$ being the number of cells. Moreover, we introduce the time discretization 
    $t_n=n\Delta t, \Delta t>0, n\in \mathbb{N}$. 

    To derive a UGK scheme, one of the main ingredient relies on a suitable interface value $F_j(t, x_{i+1/2})$, usually based on a Duhamel formula (see \cite{kxu2010ugks, mieussens2013}), that will serve in the finite volume formulation as a  flux approximation of the space derivative in \eqref{eq:semidiscrete}. Another ingredient is the space reconstruction of $F_j(t, x)$ in the  integral term of Duhamel formula \eqref{duhamel2}.     
    After recalling the UGKS, we will present some spatial reconstructions and we will see that the general context considered here induces some difficulties.  
    
    \phantomsection
    
    \subsection{UGKS}
    Our goal is to design a UGKS type numerical scheme for a general class of linear equations \eqref{eq:boltzmann}. Let recall the basics of UGKS framework (see \cite{kxu2010ugks, mieussens2013}). First, starting from  \eqref{eq:semidiscrete}, we define the averages of the density and distribution function on cell $i$ at time $t_n$ 
    $$
\left(    
\begin{matrix}
\rho^n_i \\    
F^n_{i,j}
\end{matrix}
\right) 
    = 
 \frac{1}{\Delta x}  \int_{x_{i-1/2}}^{x_{i+1/2}} \left(    
\begin{matrix}
\rho(t_n, x) \\    
F_j(t_n, x)
\end{matrix}
\right)dx,  
$$
    and the macroscopic and microscopic numerical fluxes across the interface $x_{i+1/2}$ 
$$
\left(    
\begin{matrix}
\Phi_{i+1/2} \\    
\phi_{i+1/2,j}
\end{matrix}
\right) 
    = 
\frac{1}{\eta\Delta t}  \int_{t_n}^{t_{n+1}} 
\left( \begin{matrix}
\sum_{j=1}^{2N} v_j F_j(t, x_{i+1/2})\Delta v  \\    
v_j F_j(t, x_{i+1/2})
\end{matrix}
\right)dt. 
$$
The finite volume formulations of the  macroscopic and  kinetic  equations are thus 
\begin{eqnarray}
\label{ugks_rho}
\frac{\rho^{n+1}_i-\rho^n_i}{\Delta t} +\frac{1}{\Delta x}(\Phi_{i+1/2}-\Phi_{i-1/2}) &=& 0, \\
\label{ugks_f}
\frac{F^{n+1}_{i,j}-F^{n}_{i,j}}{\Delta t} +\frac{1}{\Delta x}(\phi_{i+1/2,j}-\phi_{i-1/2,j}) &=& \frac{\sigma}{\eta\varepsilon} DF^{n+1}_{i,j}, 
\end{eqnarray}
where the collision term is implicit for stability reason. 

As we can see, the interface value of 
$F_j(t, x_{i+1/2})$ plays an important role since it enables to compute the numerical fluxes of the finite volume method. 
As mentioned above, the main idea of UGKS relies on the suitable approximation of the interface value of 
$F_j(t, x_{i+1/2})$ obtained from a space approximation of a Duhamel formula. Indeed, the interface value is obtained from a suitable space approximation of the Duhamel formula of the original equation.  Here, we will use the Duhamel formula \eqref{duhamel2} which includes a penalization procedure with the factor $\lambda_\star$ and we will discuss the space approximation that ensures consistency and good asymptotic behavior. Contrary to the BGK case studied in \cite{kxu2010ugks, mieussens2013} where the distribution function $F_j(t, x)$ and the density $\rho(t, x)$ have to be reconstructed, 
the general case considered here only involves $F_j(t, x)$. In the sequel, different reconstructions are discussed and we will see that some choices require the full knowledge of the spectral decomposition of the linear collision operator, which is not acceptable from a computational point of view. Indeed, our goal is to design a UGKS scheme which is computationally efficient, preserves the diffusion limit $\eta=\varepsilon\to 0$ 
and the collisionless limit $\eta$ fixed and $\varepsilon\to \infty$.

    \subsection{Spatial Approximation: first attempt}
    A first (natural) reconstruction of $F_j$ on the spatial mesh is:
    \begin{equation}
    \label{reconstruction1}
    F_j(t, x) = F_{i+1/2,j}(t) + \frac{F_j{}(t,x_{i+1}) - F_{j}(t,x_{i})}{\Delta x} \left(x - x_{i+1/2}\right), \;\;\; x\in [x_i, x_{i+1}].  
    \end{equation}
    The reconstruction is inserted in \eqref{duhamel} to get an approximation of the interface value $F_{i+1/2, j}(t) \approx F_j(t, x_{i+1/2})$. Using the reconstruction \eqref{reconstruction1}, the integral term in \eqref{duhamel} evaluated at $x=x_{i+1/2}$ enables to get the following approximation 
    \begin{align}
    F\left(t - \frac{\eta\varepsilon (u - t_n)}{\sigma}, x_{i+1/2} - \frac{v_j \varepsilon u}{\sigma}\right) &\approx F\left(t, x_{i+1/2} - \frac{v_j \varepsilon u}{\sigma}\right) \nonumber \\
    &\hspace{-4cm}= F_{i+1/2}(t) + \frac{F_{i+1}(t) - F_{i}(t)}{\Delta x} \left(x_{i+1/2} - \frac{\varepsilon v_j u}{\sigma} - x_{i+1/2}\right) \nonumber \\
    \label{approx_int}
    &\hspace{-4cm}= F_{i+1/2}(t) - \frac{v_j \varepsilon u}{\sigma} \frac{F_{i+1}(t) - F_{i}(t)}{\Delta x}.
    \end{align}
    Let us remark that in the first approximation the shift in time has been neglected, as usual in the UGKS for the diffusion regime. 
    
    Regarding now the first term in \eqref{duhamel}, 
    we consider a first order approximation in space based on the sign of $v_j$. We thus obtain 
    \begin{equation}
    \label{approx_cl}
    F_j\Big(t_n, x_{i+1/2} - \frac{v_j (t - t_n)}{\eta\varepsilon}\Big) \approx F_{i,j}^n \mathds{1}_{v_j > 0} + F_{i+1,j}^n \mathds{1}_{v_j < 0}, 
    \end{equation}
    where $\mathds{1}_{v_j < 0}$ denotes the Heaviside function which is equal to one if $v_j<0$ and zero else (same for $\mathds{1}_{v_j > 0}$).  
    
    Thus, evaluating \eqref{duhamel2} at $x = x_{i+1/2}$ and using the previous approximations \eqref{approx_int} and \eqref{approx_cl}, we obtain 
    the following approximation $F_{i+1/2,j}(t)$ of the interface value $F_j(t, x_{i+1/2})$:
    \begin{eqnarray}
       F_{i+1/2,j}(t) &:=& \exp\left({\frac{\lambda_\star \sigma (t - t_n)}{\eta \varepsilon}}\right) \left(F_{i,j}^n \mathds{1}_{v_j > 0} + F_{i+1,j}^n \mathds{1}_{v_j < 0}\right) \nonumber\\
         \label{discrete}
        &&\hspace{-1.2cm} + \langle e_j, \int_{0}^{{\frac{\sigma (t - t_n)}{\eta\varepsilon}}} \exp\left(\lambda_\star {u}\right) (D - \lambda_\star I) \left[F_{i+1/2}(t) - \frac{\varepsilon v_j u}{\sigma} \frac{F_{i+1}(t) - F_{i}(t)}{\Delta x}\right] du \rangle.
    \end{eqnarray}
This approximation can be inserted in \eqref{ugks_f} to get a first version of the scheme but we can observe the resulting scheme will couple the space and velocity indices so that the calculation of $F_{i,j}^{n+1}$ will be very costly.  Moreover, 
as observed in \eqref{chapman_continuous}, capturing the correct asymptotic behavior requires to introduce $D^+$ which is not the case with this first attempt. Thus, we will next try to work only with $F_{i+1/2}(t)$ and not with $F_i(t)$ and $F_{i+1}(t)$. 
    
    \subsection{Spatial Approximation: second attempt}

    Thus, instead of the reconstruction \eqref{reconstruction1}, we propose the following reconstruction:
    \begin{equation}
    \label{reconstruction}
    F_j(t, x) = F_{i+1/2,j}(t) + \frac{F_{i+1,j}^n - F_{i,j}^n}{\Delta x} \left(x - x_{i+1/2}\right),
    \end{equation}
    where we neglect the temporal variations of spatial gradients ($t_n \leq t \leq \Delta t$). 
    Thus, evaluating \eqref{duhamel2} at $x=x_{i+1/2}$, using the reconstruction \eqref{reconstruction} for the integral term and \eqref{approx_cl} for the first term, we obtain:
    \begin{eqnarray}
        F_{i+1/2,j}(t) &:=& \exp\left({\frac{\lambda_\star \sigma (t - t_n)}{\eta \varepsilon}}\right) \left(F_{i,j}^n \mathds{1}_{v_j > 0} + F_{i+1,j}^n \mathds{1}_{v_j < 0}\right) \notag \\
        \label{discrete2}
        &&\hspace{-1.2cm}+ \langle e_j, \int_{0}^{{\frac{\sigma (t - t_n)}{\eta\varepsilon}}} \exp\left({\lambda_\star u}\right) (D - \lambda_\star I) \left[F_{i+1/2}(t) - \frac{\varepsilon v_j u}{\sigma} \frac{F_{i+1}^n - F_i^n}{\Delta x}\right] du \rangle. 
    \end{eqnarray}
From the relation \eqref{discrete2}, $F_{i+1/2}(t)$ can now be expressed as the solution of the following linear system (which is local in space): 
    \begin{eqnarray}
    \label{MF=S}
    M(t) F_{i+1/2}(t) = S(t),
    \end{eqnarray}
    where the $j$-th component of the source term $S(t)$ is:
    \begin{eqnarray}
        S_j(t) &=& \exp\left({\frac{\lambda_\star \sigma (t - t_n)}{\eta \varepsilon}}\right) \left(F_{i,j}^n \mathds{1}_{v_j > 0} + F_{i+1,j}^n \mathds{1}_{v_j < 0}\right) \notag \\
        \label{defS}
        &+& \langle v_j e_j, \int_{0}^{{\frac{\sigma (t - t_n)}{\eta\varepsilon}}} \exp\left({\lambda_\star u}\right) (D - \lambda_\star I) \left[-\frac{\varepsilon {u}}{\sigma} \frac{F_{i+1}^n - F_i^n}{\Delta x}\right] du \rangle,
    \end{eqnarray}
    and the matrix $M(t)$ is given by 
    \begin{eqnarray}
    M(t) &=& I - \int_{0}^{{\frac{\sigma (t - t_n)}{\eta\varepsilon}}} \exp\left({\lambda_\star u}\right) (D - \lambda_\star I) du\nonumber\\
    \label{matrixM}
    &=&\exp\left({\frac{\lambda_\star \sigma (t - t_n)}{\eta \varepsilon}}\right) I + \left(1 - \exp\left({\frac{\lambda_\star \sigma (t - t_n)}{\eta \varepsilon}}\right)\right) \left(\frac{1}{\lambda_\star} D\right).
    \end{eqnarray} 
Some important properties of the matrix $M(t)$ are given in the following proposition.     
    \begin{proposition}
    The matrix $M(t)$ defined by \eqref{matrixM} is positive definite and thus invertible and its inverse is given by 
       \begin{eqnarray}
    \label{M-1}
    M(t)^{-1} &=& 
\sum_{k=0}^{N'} \mathcal{A}_k^{-1} P_k,      \end{eqnarray}
    where $N'$ is the number of eigenvalues, $P_k$ denotes the orthogonal projectors associated to the eigenvalues $\lambda_k$ of $D$, and ${\cal A}_k\in \mathbb{R}$ are defined as follows 
    \begin{equation}
     \label{defAk}
 {\cal A}_k = \exp\left({\frac{\lambda_\star \sigma (t - t_n)}{\eta \varepsilon}}\right) + \left(\frac{\lambda_k}{\lambda_\star}\right) \left(1 - \exp\left({\frac{\lambda_\star \sigma (t - t_n)}{\eta \varepsilon}}\right)\right). 
 \end{equation}
    \end{proposition}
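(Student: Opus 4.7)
The plan is to exploit the spectral decomposition of $D$ provided by Proposition~\ref{prop1}. Since $D$ is symmetric, $M(t)$ inherits symmetry from the definition \eqref{matrixM}, so it suffices to diagonalize $M(t)$ in the same orthonormal basis that diagonalizes $D$ and check that all eigenvalues are strictly positive.

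Concretely, I would set $\alpha(t) = \exp\!\left(\tfrac{\lambda_\star \sigma (t - t_n)}{\eta \varepsilon}\right)$ to lighten notation, and substitute $I = \sum_{k=0}^{N'} P_k$ and $D = \sum_{k=0}^{N'} \lambda_k P_k$ into \eqref{matrixM}. Using the orthogonality relations $P_k P_j = \delta_{kj} P_k$, this immediately yields
\begin{equation*}
M(t) = \sum_{k=0}^{N'} \Bigl[\alpha(t) + (1-\alpha(t))\,\tfrac{\lambda_k}{\lambda_\star}\Bigr] P_k = \sum_{k=0}^{N'} \mathcal{A}_k P_k,
\end{equation*}
so the $\mathcal{A}_k$ of \eqref{defAk} are exactly the eigenvalues of $M(t)$.

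Next I would verify positivity of each $\mathcal{A}_k$. Since $\lambda_\star < 0$ (Remark after Proposition~\ref{prop1}) and $t \geq t_n$, one has $\alpha(t) \in (0,1]$. For $k=0$, $\lambda_0 = 0$ gives $\mathcal{A}_0 = \alpha(t) > 0$. For $k \geq 1$, both $\lambda_k$ and $\lambda_\star$ are strictly negative (as $D$ is negative with kernel Span$(\bigbold{1})$), so $\lambda_k/\lambda_\star > 0$ and both contributions $\alpha(t)$ and $(1-\alpha(t))\lambda_k/\lambda_\star$ are nonnegative with at least one strictly positive for every value of $\alpha(t) \in (0,1]$. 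Thus $\mathcal{A}_k > 0$ for all $k$, and $M(t)$ is symmetric positive definite, hence invertible.

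The inverse formula \eqref{M-1} then follows immediately: setting $N := \sum_{k=0}^{N'} \mathcal{A}_k^{-1} P_k$ and using $P_k P_j = \delta_{kj} P_k$ together with $\sum_k P_k = I$, a direct computation gives $M(t) N = \sum_k \mathcal{A}_k \mathcal{A}_k^{-1} P_k = I$. There is no real obstacle here; the only point requiring a bit of care is the sign analysis for $\mathcal{A}_k$, which hinges on the combined negativity of $\lambda_\star$ and the nonzero eigenvalues $\lambda_k$ of $D$.
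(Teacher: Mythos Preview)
Your argument is correct and follows essentially the same route as the paper: both substitute the spectral decomposition $I=\sum_k P_k$, $D=\sum_k \lambda_k P_k$ into \eqref{matrixM} to obtain $M(t)=\sum_k \mathcal{A}_k P_k$, from which the inverse formula \eqref{M-1} is immediate. The only cosmetic difference is that the paper phrases positive definiteness as $M(t)$ being a convex combination of the positive definite matrix $I$ and the positive semidefinite matrix $\tfrac{1}{\lambda_\star}D$ with a strictly positive weight on $I$, whereas you verify $\mathcal{A}_k>0$ directly for each $k$; these are equivalent observations.
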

    \begin{proof} 
    One can observe that $M(t)$ is a convex combination of the two positive matrices $I$ and $\frac{1}{\lambda_\star} D$ (since $\lambda_\star<0$ and $D$ is negative). 
    Moreover, it is always positive definite since $I$ is, $\left(\frac{1}{\lambda_\star} D\right)$ is positive, and the coefficient in front of $I$ is always strictly positive, so the system will always be numerically invertible. 
    
    Concerning the inverse $M(t)^{-1}$, we recall the projector properties 
    from Prop. \ref{prop1}: $\sum_{k=0}^{N'} P_k=I$ and $\sum_{k=1}^{N'} \lambda_kP_k=D$.  Indeed, using these relations in \eqref{matrixM}, one has  
    \begin{eqnarray*}
    M(t) &=& \exp\left({\frac{\lambda_\star \sigma (t - t_n)}{\eta \varepsilon}}\right) \sum_{k=0}^{N'} P_k + \left(1 - \exp\left({\frac{\lambda_\star \sigma (t - t_n)}{\eta \varepsilon}}\right)\right) \sum_{k=1}^{N'} \left(\frac{\lambda_k}{\lambda_\star}\right) P_k, \nonumber
    \end{eqnarray*}
which reads: $M(t)=\sum_{k=0}^{N'} {\cal A}_k P_k$. 
The expression \eqref{M-1}  of the inverse of $M(t)$ is deduced from this latter form.  
\end{proof}
 Let us remark that the case $k=0$ in \eqref{M-1} will play an important role since $\lambda_0=0$ and for $k=0$, we have ${\cal A}_0=\exp\left({\frac{\lambda_\star \sigma (t - t_n)}{\eta \varepsilon}}\right)$.

    \subsubsection{Computation of the Interface Value}
    From the relation \eqref{MF=S} satisfied by the interface value $F_{i+1/2}(t)$ and the expressions \eqref{defS} of $S(t)$ and \eqref{M-1} of $M(t)^{-1}$, we will compute explicitly the interface value $F_{i+1/2}(t)$. In order to avoid the explicit calculation of the projectors $P_k$ and thus to get a simple numerical scheme, some approximations will be performed. 
    
    First, let introduce some useful notations.    
        The half densities associated with positive and negative velocities are 
        \begin{equation}
            \label{half_densities}
            \rho_i^{-,n} = \frac{1}{2N} \sum_{j=1}^N F_{i,j}^n= \Delta v \sum_{j=1}^N F_{i,j}^n  \;\; \mbox{ and } \;\; \rho_i^{+, n} = \frac{1}{2N} \sum_{j=N+1}^{2N} F_{i,j}^n.   
    \end{equation}    
Then, we introduce the coefficient 
\begin{eqnarray}
       \ds \mathcal{C}(t) &=& \lambda_\star \int_{0}^{\frac{\sigma (t - t_n)}{\eta\varepsilon}} \exp\left({\lambda_\star u}\right) u du 
       \label{defC}
       = \frac{1}{\lambda_\star} \left[1 + \left({\frac{\lambda_\star \sigma (t - t_n)}{\eta \varepsilon}}-1\right) \exp\left({\frac{\lambda_\star \sigma (t - t_n)}{\eta \varepsilon}}\right)\right].
\end{eqnarray} 
We now compute the interface value $F_{i+1/2}$ using the expressions \eqref{defS} of $S(t)$ and \eqref{M-1} of $M(t)^{-1}$:      \begin{eqnarray}
    F_{i+1/2}(t)&=& M(t)^{-1} S(t) \notag \\
    &=& P_0 \left\{F_{i,j}^n \mathds{1}_{v_j > 0} + F_{i+1,j}^n \mathds{1}_{v_j < 0}\right\}_{j=1, \dots, 2N} \notag \\
    &-& \frac{\varepsilon \mathcal{C}(t)}{\sigma} \frac{1}{2N} \langle (D V - \lambda_\star V), \frac{F_{i+1}^n - F_i^n}{\Delta x} \rangle \bigbold{1} \notag \\
    &+& \sum_{k=1}^{N'} {\cal A}_k^{-1} P_k\left\{ \exp\left({\frac{\lambda_\star \sigma (t - t_n)}{\eta \varepsilon}}\right) \left(F_{i,j}^n \mathds{1}_{v_j > 0} + F_{i+1,j}^n \mathds{1}_{v_j < 0}\right)\right\}_{j=1, \dots, 2N} \notag \\
    \label{resFi}
    &+& \sum_{k=1}^{N'}{\cal A}_k^{-1} P_k \left\{ \langle v_j e_j, \frac{\mathcal{C}(t)}{\lambda_\star} (D - \lambda_\star I) \left[-\frac{\varepsilon}{\sigma} \frac{F_{i+1}^n - F_i^n}{\Delta x}\right] \rangle \right\}_{j=1, \dots, 2N},  \nonumber\\
    &=& \mbox{\mbox{\textcircled{a}}}+ \mbox{\textcircled{b}}+ \mbox{\textcircled{c}}+ \mbox{\textcircled{d}}, 
    \end{eqnarray}
    where we used the notations $\left\{ w_j \right\}_{j=1, \dots, 2N}$ the vector $w\in\mathbb{R}^{2N}$ and we remind ${\cal A}_k$ is defined by  \eqref{defAk}. Let us now detail how we deal with  terms $\mbox{\textcircled{a}},\mbox{\textcircled{b}}, \mbox{\textcircled{c}}, \mbox{\textcircled{d}}$. 
    
    \subsubsection{Computation of the terms \textcircled{a} and  \textcircled{b}} Let consider in this part the first two terms $\mbox{\textcircled{a}}$ and $\mbox{\textcircled{b}}$. First, from the definition \eqref{half_densities} of the half densities, one has for $\mbox{\textcircled{a}}$ 
    \begin{equation}
        \label{arond}
    \mbox{\textcircled{a}}=\left(\rho_i^{+,n} + \rho_{i+1}^{-,n}\right) \bigbold{1}. 
        \end{equation}
    Second, we provide some details regarding $\mbox{\textcircled{b}}$. Recalling $(D - \lambda_\star I)$ is symmetric and from the definition \eqref{defC} of ${\cal C}$, one has   
    \begin{eqnarray}
    \mbox{\textcircled{b}}&=& -\lambda_\star \frac{\varepsilon}{\sigma} \frac{1}{2N} \langle (D  - \lambda_\star I)V, \frac{{\cal C}(t)}{\lambda_\star} \frac{F_{i+1}^n - F_i^n}{\Delta x} \rangle {\bigbold{1}}\nonumber\\
    &=& \lambda_\star\frac{1}{2N} \langle (D  - \lambda_\star I)V, \int_{0}^{{\frac{\sigma (t - t_n)}{\eta\varepsilon}}} \exp\left({\lambda_\star u}\right) \left[-\frac{\varepsilon u}{\sigma} \frac{F_{i+1}^n - F_i^n}{\Delta x}\right] du \rangle {\bigbold{1}} \nonumber\\
     &=& \lambda_\star\frac{1}{2N} \langle V, (D - \lambda_\star I) \int_{0}^{{\frac{\sigma (t - t_n)}{\eta\varepsilon}}} \exp\left({\lambda_\star u}\right) \left[-\frac{\varepsilon u}{\sigma} \frac{F_{i+1}^n - F_i^n}{\Delta x}\right] du \rangle{\bigbold{1}}\nonumber\\
     \label{termb_1}
        &=& \lambda_\star\frac{1}{2N} \sum_{k=1}^{2N} \langle v_k e_k, \int_{0}^{{\frac{\sigma (t - t_n)}{\eta\varepsilon}}} \exp\left({\lambda_\star u}\right) (D - \lambda_\star I) \left[-\frac{\varepsilon u}{\sigma} \frac{F_{i+1}^n - F_i^n}{\Delta x}\right] du \rangle{\bigbold{1}}. 
    \end{eqnarray}

   \noindent Let us remark that this term is independent of $j$  so that it has no contribution in the diffusion limit.  
    
  
     \subsubsection{Computation of the terms  {\textcircled{c}} and \textcircled{d}} 
     Now, we will consider the terms $\mbox{\textcircled{c}}$ and $\mbox{\textcircled{d}}$ for which some approximations will be performed to avoid the explicit calculation of the projectors $P_k$. 

      First let recall the expression of the term $\mbox{\textcircled{c}}$
      $$
      \mbox{\textcircled{c}}=\sum_{k=1}^{N'} {\cal A}_k^{-1} P_k \exp\left({\frac{\lambda_\star \sigma (t - t_n)}{\eta \varepsilon}}\right) \left\{F_{i,j}^n \mathds{1}_{v_j > 0} + F_{i+1,j}^n \mathds{1}_{v_j < 0}\right\}_{j=1, \dots, 2N}. 
      $$
On the one side, we observe that the term $\mbox{\textcircled{c}}$ decays exponentially fast and thus does not contribute in the diffusion limit. 
On the other side, in the transport limit $\varepsilon\to +\infty$, one has 
    \begin{align}
 \sum_{k=1}^{N'} {\cal A}_k^{-1} P_k &=   \sum_{k=1}^{N'} \frac{1}{\exp\left({\frac{\lambda_\star \sigma (t - t_n)}{\eta \varepsilon}}\right) + \frac{\lambda_k}{\lambda_\star} \left(1 - \exp\left({\frac{\lambda_\star \sigma (t - t_n)}{\eta \varepsilon}}\right)\right)} P_k \nonumber\\
 &= \sum_{k=1}^{N'} P_k + O(1/\varepsilon) = (I - P_0) + O(1/\varepsilon).
    \end{align}
Hence, from these asymptotic behaviors,  and in view of constructing a method which does not require the knowledge of $P_k$, 
we propose the following approximation for $\mbox{\textcircled{c}}$  
    \begin{equation}
    \label{crond}
    \mbox{\textcircled{c}} 
    \approx  
    (I - P_0) \exp\left({\frac{\lambda_\star \sigma (t - t_n)}{\eta \varepsilon}}\right) \left\{F_{i,j}^n \mathds{1}_{v_j > 0} + F_{i+1,j}^n \mathds{1}_{v_j < 0}\right\}_{j=1, \dots, 2N}. 
    \end{equation}
    
Let now consider the term $\mbox{\textcircled{d}}$ and 
first, let recall its expression 
 \begin{equation}
     \label{def:termd}
 \mbox{\textcircled{d}}= \sum_{k=1}^{N'}{\cal A}_k^{-1} P_k \left\{ \langle v_j e_j, \frac{\mathcal{C}(t)}{\lambda_\star} (D - \lambda_\star I) \left[-\frac{\varepsilon}{\sigma} \frac{F_{i+1}^n - F_i^n}{\Delta x}\right] \rangle \right\}_{j=1, \dots, 2N}. 
 \end{equation}
Then, we observe that in the diffusion limit, we have, for any arbitrary $q\in\mathbb{N}$:
    \begin{align}
\sum_{k=1}^{N'} \frac{{\cal A}_k^{-1}}{\lambda_\star} P_k 
\label{approx4}
&= \sum_{k=1}^{N'} \frac{1}{\lambda_k} P_k + O(\varepsilon^q) = D^{+} + O(\varepsilon^q), 
    \end{align}
   since from the definition \eqref{defAk} of ${\cal A}_k$,  
   we deduce ${\cal A}_k^{-1}$ tends to $\lambda_\star/\lambda_k$ as $\varepsilon\to 0$ and from Prop. \ref{prop1}, we have $D = \sum_{k=1}^{N'} \lambda_k P_k$. However, little can be said about the term in braces in \eqref{def:termd} except that $\displaystyle 
    \lim_{\eta=\varepsilon \to 0} \mathcal{C}(t) = 1/\lambda_\star$. Moreover, using the approximation \eqref{approx4} imposes the term into braces to be orthogonal to ${\bf 1}$ which is not the case due to the presence of $F^n_{i+1}-F_i^n$. Hence, the calculation of the term $\mbox{\textcircled{d}}$ would require the knowledge of the projectors $P_k$ which we want to avoid since it may be very costly in the general case. 
    Hence, in the next subsection, we will consider another reconstruction which will avoid the knowledge of the projectors $P_k$. 
    
    \subsection{Spatial Approximation: third attempt}
    Due to the obstacle observed previously, we thus consider the following reconstruction for $F_j(t, x)$  
    \begin{equation}
        \label{third}
         F_j(t, x) = F_{i+1/2,j}(t) + \frac{\rho_{i+1}^n-\rho_{i}^n}{\Delta x}(x-x_{i+1/2}). 
    \end{equation}
    As before, assessing \eqref{duhamel2} at $x=x_{i+1/2}$, using  reconstruction \eqref{third} for the integral term and \eqref{approx_cl} for the first term, we get 
the following interface relation for $F_{i+1/2, j}(t)$ 
    \begin{eqnarray}
    F_{i+1/2, j}(t)&=& \exp\left({\frac{\lambda_\star \sigma (t - t_n)}{\eta \varepsilon}}\right) \left(F_{i,j}^n \mathds{1}_{v_j > 0} + F_{i+1,j}^n \mathds{1}_{v_j < 0}\right) \notag \\
        \label{discrete3}
        &&\hspace{-1.2cm}+ \langle e_j, \int_{0}^{{\frac{\sigma (t - t_n)}{\eta\varepsilon}}} \exp\left({\lambda_\star u}\right) (D - \lambda_\star I) \left[F_{i+1/2}(t) - \frac{\varepsilon v_j {{u}}}{\sigma} \frac{\rho_{i+1}^n - \rho_i^n}{\Delta x}{\bf 1}\right] du \rangle. 
    \end{eqnarray}
    The same calculations as the ones done before can be performed since the expression of $M(t)^{-1}$ given by \eqref{M-1} is unchanged, but the expression of $S(t)$ in \eqref{defS} is slightly modified since its expression is now 
    \begin{eqnarray}
        S_j(t) &=& \exp\left({\frac{\lambda_\star \sigma (t - t_n)}{\eta \varepsilon}}\right) \left(F_{i,j}^n \mathds{1}_{v_j > 0} + F_{i+1,j}^n \mathds{1}_{v_j < 0}\right) \notag \\
        \label{defS2}
        &+& \langle v_j e_j, \int_{0}^{{\frac{\sigma (t - t_n)}{\eta\varepsilon}}} \exp\left({\lambda_\star u}\right) (D - \lambda_\star I) \left[-\frac{\varepsilon {u}}{\sigma} \frac{\rho_{i+1}^n - \rho_i^n}{\Delta x}{\bf 1}\right] du \rangle. 
    \end{eqnarray}
    So now $F_{i+1/2}$ satisfies $F_{i+1/2}(t) = M(t)^{-1}S(t)$ and decompose
    $F_{i+1/2}(t) = \mbox{\textcircled{a}}+\mbox{\textcircled{b}}+\mbox{\textcircled{c}}+\mbox{\textcircled{d}}$ as: 
    \begin{eqnarray}
    \mbox{\textcircled{a}}&=&P_0\left\{F^n_{i,j}\mathds{1}_{v_j > 0} + F_{i+1,j}^n \mathds{1}_{v_j < 0}\right\}_{j=1, \dots, 2N}\nonumber\\
    \mbox{\textcircled{b}}&=&-\frac{\varepsilon \mathcal{C}(t)}{\sigma} \frac{1}{2N} \langle (D V - \lambda_\star V), \frac{\rho_{i+1}^n - \rho_i^n}{\Delta x} \rangle \bigbold{1}\nonumber\\
    \mbox{\textcircled{c}}&=&\sum_{k=1}^{N'} {\cal A}_k^{-1} P_k \exp\left({\frac{\lambda_\star \sigma (t - t_n)}{\eta \varepsilon}}\right) \left\{F_{i,j}^n \mathds{1}_{v_j > 0} + F_{i+1,j}^n \mathds{1}_{v_j < 0}\right\}_{j=1, \dots, 2N}\nonumber\\
    \mbox{\textcircled{d}}&=&\sum_{k=1}^{N'}{\cal A}_k^{-1} P_k \left\{ \langle v_j e_j, \frac{\mathcal{C}(t)}{\lambda_\star} (D - \lambda_\star I) \left[-\frac{\varepsilon}{\sigma} \frac{\rho_{i+1}^n - \rho_i^n}{\Delta x}\bf 1\right] \rangle \right\}_{j=1, \dots, 2N}.  \nonumber
    \end{eqnarray}
    As previously, we now deal with the decomposition. Since 
    $\mbox{\textcircled{a}}$ and $\mbox{\textcircled{c}}$ do not depend on the gradient reconstruction, they are unchanged and respectively given by \eqref{arond} and \eqref{crond}; we  focus  on $\mbox{\textcircled{c}}$ and $\mbox{\textcircled{d}}$. 
        
    \subsubsection{Calculation of the term  {\textcircled{b}}}
    Regarding the term $\mbox{\textcircled{b}}$, we now have the following expression 
     \begin{eqnarray*}
    \mbox{\textcircled{b}} &=&  - \frac{\varepsilon {\cal C}(t)}{\sigma} \frac{1}{2N} \langle (D  - \lambda_\star I)V,  \frac{\rho_{i+1}^n - \rho_i^n}{\Delta x} \rangle {\bf 1} \nonumber\\
    &=&  - \frac{\varepsilon {\cal C}(t)}{\sigma} \frac{1}{2N} \langle V, (D  - \lambda_\star I)\frac{\rho_{i+1}^n - \rho_i^n}{\Delta x} \rangle {\bf 1}\nonumber\\
    &=&   \frac{\varepsilon \lambda_\star {\cal C}(t)}{\sigma} \frac{1}{2N} \langle V, \frac{\rho_{i+1}^n - \rho_i^n}{\Delta x} \rangle {\bf 1}=  \frac{\varepsilon \lambda_\star {\cal C}(t)}{\sigma} \frac{1}{2N} \sum_{k=1}^{2N} v_k \, \frac{\rho_{i+1}^n - \rho_i^n}{\Delta x}{\bf 1}   = {\bf 0},  
    \end{eqnarray*}
     since the velocity grid satisfies $\sum_{k=1}^{2N} v_k=0$ by assumption.

    \subsubsection{Calculation of the term {\textcircled{d}}}
    Let now investigate the term $\mbox{\textcircled{d}}$. Using the reconstruction \eqref{discrete3},  the term $\mbox{\textcircled{d}}$ becomes 
    \begin{eqnarray}
       \mbox{\textcircled{d}} &=& \sum_{k=1}^{N'} {\cal A}_k^{-1}P_k \left\{ \langle v_j e_j, {{\mathcal C}(t)} (D - \lambda_\star I) \left[-\frac{\varepsilon}{\sigma} \frac{ \rho_{i+1}^n - \rho_i^n}{\Delta x} \bigbold{1} \right] \rangle \right\}_{j=1, \dots, 2N}\nonumber\\
       &= & D^{+} \left\{ \langle v_j e_j, {{\mathcal C}(t)} (D - \lambda_\star I) \left[-\frac{\varepsilon}{\sigma} \frac{ \rho_{i+1}^n - \rho_i^n}{\Delta x} \bigbold{1} \right] \rangle \right\}_{j=1, \dots, 2N} + {\cal O}(\varepsilon^q)\nonumber\\
        &=&  {{\mathcal C}(t) \lambda_\star} D^{+} \left\{ \langle v_j e_j, \frac{\varepsilon}{\sigma} \frac{ \rho_{i+1}^n - \rho_i^n}{\Delta x} \bigbold{1} \rangle \right\}_{j=1, \dots, 2N} ={{\mathcal C}(t)\lambda_\star} D^{+} \left[\frac{\varepsilon}{\sigma} \frac{ \rho_{i+1}^n - \rho_i^n}{\Delta x}\right] V \nonumber\\
        \label{drond}
        &=& {{\mathcal C}(t)\lambda_\star}\left[\frac{\varepsilon}{\sigma} \frac{ \rho_{i+1}^n - \rho_i^n}{\Delta x}\right] U,
    \end{eqnarray}
    where we used $D \bigbold{1}=0$ and $D^+V=U$. 
We can observe that with the reconstruction \eqref{discrete3}, the knowledge of the projectors $P_k$ can be avoided but as we will see, the asymptotic behavior can still be recovered. 

     \subsubsection{Computation of the Interface value}
 All in all, inserting the expressions \eqref{arond}, \eqref{crond} and \eqref{drond} of $\mbox{\textcircled{a}},  \mbox{\textcircled{c}}$ and $\mbox{\textcircled{d}}$  (recalling that $\mbox{\textcircled{b}}=0$) in \eqref{resFi} leads to the following expression for the interface value:
    \begin{eqnarray}
    F_{i+1/2}(t) &=& \left(\rho_i^{+,n} + \rho_{i+1}^{-,n}\right) \bigbold{1} \nonumber\\ 
    &+& (I - P_0) \exp\left({\frac{\lambda_\star \sigma (t - t_n)}{\eta \varepsilon}}\right) \left\{F_{i,j}^n \mathds{1}_{v_j > 0} + F_{i+1,j}^n \mathds{1}_{v_j < 0}\right\}_{j=1, \dots, 2N} \nonumber\\
    &+& \lambda_\star \mathcal{C}(t) \left[\frac{\varepsilon}{\sigma} \frac{ \rho_{i+1}^n - \rho_i^n}{\Delta x}\right] U \nonumber\\
    &=& \exp\left({\frac{\lambda_\star \sigma (t - t_n)}{\eta \varepsilon}}\right) \left\{F_{i,j}^n \mathds{1}_{v_j > 0} + F_{i+1,j}^n \mathds{1}_{v_j < 0}\right\}_{j=1, \dots, 2N} \nonumber\\
    &&\hspace{-0.6cm} + \left(1 - \exp\left({\frac{\lambda_\star \sigma (t - t_n)}{\eta \varepsilon}}\right)\right) \left(\rho_i^{+,n} + \rho_{i+1}^{-,n}\right) \bigbold{1} + \lambda_\star \mathcal{C}(t) \left[\frac{\varepsilon}{\sigma} \frac{ \rho_{i+1}^n - \rho_i^n}{\Delta x}\right] U. \label{resFi3}
    \end{eqnarray}
    This expression of $F_{i+1/2}(t)$ will be inserted in the finite volume scheme which will lead to a new UGKS described hereafter.

    \subsection{New UGKS}
    The complete scheme consists of considering \eqref{ugks_rho}-\eqref{ugks_f} with the following definition for the flux $\phi_{i+1/2,j}$  
    $$
    \phi_{i+1/2,j}=\frac{1}{\eta \Delta t} \int_{t_n}^{t_{n+1}} v_j F_{i+1/2,j}(t) dt, 
    $$
     where the expression \eqref{resFi3} is used for the interface value $F_{i+1/2,j}(t)$. Some calculation enables to get  the following explicit expression for the flux:     
    \begin{eqnarray}
    \phi_{i+1/2,j} &=& \frac{1}{\eta \Delta t} \int_{t_n}^{t_n + \Delta t} {v_j}\Bigg[ \exp\left({\frac{\lambda_\star \sigma (t - t_n)}{\eta \varepsilon}}\right) \left(F_{i,j}^n\mathds{1}_{v_j > 0} + F_{i+1,j}^n \mathds{1}_{v_j < 0}\right)  \nonumber\\
    &&\hspace{-1cm} +  \left(1 - \exp\left({\frac{\lambda_\star \sigma (t - t_n)}{\eta \varepsilon}}\right)\right) \left(\rho_i^{+,n} + \rho_{i+1}^{-,n}\right) + \lambda_\star \mathcal{C}(t) \frac{\varepsilon}{\sigma} \frac{ \rho_{i+1}^n - \rho_i^n}{\Delta x} U_j \Bigg] dt \nonumber\\
    && \hspace{-1cm}= \mbox{A}(\Delta t, \sigma, \eta, \varepsilon) \left(F_{i,j}^n \mathds{1}_{v_j > 0} + F_{i+1,j}^n \mathds{1}_{v_j < 0}\right) v_j + \mbox{C}(\Delta t, \sigma, \eta, \varepsilon) \left(\rho_i^{+,n} + \rho_{i+1}^{-,n}\right) v_j \nonumber\\
    \label{flux_ugks}
    &&\hspace{-0.5cm} + \mbox{D}(\Delta t, \sigma, \eta, \varepsilon) \frac{ \rho_{i+1}^n - \rho_i^n}{\Delta x} \lambda_\star U_j v_j,
    \end{eqnarray}
    where the coefficients A,C,D are given by (using the notation $w= \lambda_\star\sigma\Delta t/ (\eta \varepsilon)$):
    \begin{eqnarray}
    \label{coef_A}
 \hspace{1.cm}\mbox{A}(\Delta t, \sigma, \eta, \varepsilon) &=& \frac{1}{\eta w} (e^w-1) \\ 
  \label{coef_C}
    \mbox{C}(\Delta t, \sigma, \eta, \varepsilon) &=& \frac{1}{\eta} - \mbox{A}(\Delta t, \sigma, \eta, \varepsilon) \\ 
    \mbox{D}(\Delta t, \sigma, \eta, \varepsilon) 
    \label{coef_D}
    &=& \frac{\varepsilon}{\sigma \lambda_\star} (\mbox{C}(\Delta t, \sigma, \eta, \varepsilon)-\mbox{A}(\Delta t, \sigma, \eta, \varepsilon))+\frac{\varepsilon }{\eta \sigma \lambda_\star}e^w.  
    \end{eqnarray}
    This exactly matches the scheme proposed by Mieussens in \cite{mieussens2013} for the BGK operator, if the interface value for the density used in \cite{mieussens2013} is set to $(\rho^n_i + \rho^n_{i+1})/2$.     Once the fluxes have been written, it remains to define the scheme satisfied  by $F^{n+1}_i\in \mathbb{R}^{2N}$ from \eqref{ugks_f}
    \begin{eqnarray}
    \label{ugks_new}
    \left(I - \frac{\sigma \Delta t}{\varepsilon \eta} D\right) F_i^{n+1} = F^n_i - \frac{\Delta t}{\Delta x} \left(\phi_{i+1/2} - \phi_{i-1/2}\right).
    \end{eqnarray}
    Due to the implicit treatment of the collision operator $D$, a linear system has to be inverted for each spatial cell, which is fully aligned with other strategies for stiff kinetic problems. 
    
Let us now consider the macroscopic flux $\Phi_{i+1/2}$ which is defined by 
    $$
    \Phi_{i+1/2}=\frac{1}{2N}\sum_{j=1}^{2N} \phi_{i+1/2,j},    
    $$
where the (microscopic) flux $\phi_{i+1/2,j}$ is given by  \eqref{flux_ugks}. Some simple calculations enable to get 
\begin{eqnarray*}
\Phi_{i+1/2}&=& \mbox{A}(\Delta t, \sigma, \eta, \varepsilon) (J^{+,n}_i+J^{-,n}_i) +\mbox{D}(\Delta t, \sigma, \eta, \varepsilon) \frac{\lambda_\star}{2N}\frac{ \rho_{i+1}^n - \rho_i^n}{\Delta x} \langle U, {V} \rangle,   \nonumber 
\end{eqnarray*}
with $J^{\pm,n}_i$ are given by 
$$
J_i^{-,n} =\frac{1}{2N}\sum_{j=1}^N v_j F_{i,j}^n \;\; \mbox{ and } \;\; J_i^{+,n} =\frac{1}{2N}\sum_{j=N+1}^{2N} v_j F_{i,j}^n. 
$$
But from Prop. \ref{prop1}, one has 
$\lambda_\star \langle U,  {V} \rangle = \langle V,  {V} \rangle$ so that the macroscopic flux $\Phi_{i+1/2}$ finally becomes 
\begin{eqnarray}
\label{flux_ugks_rho}
\Phi_{i+1/2}&=& \text{A}(\Delta t, \sigma, \eta, \varepsilon) (J^{+,n}_i+J^{-,n}_i)+ \frac{\text{D}(\Delta t, \sigma, \eta, \varepsilon)}{2N} \frac{ \rho_{i+1}^n - \rho_i^n}{\Delta x} \langle V, V \rangle, 
\end{eqnarray}
and the macroscopic density is updated by as 
\begin{equation}
\label{new_ugks_rho}
    \frac{\rho^{n+1}_i - \rho^n_i}{\Delta t} + \frac{\Phi_{i+1/2}-\Phi_{i-1/2}}{\Delta x} = 0.   
\end{equation}

\begin{remark}

\

\begin{itemize}
    \item  Let us remark that, contrary to the standard UGKS applied to the BGK equation, the update of $\rho^{n+1}$ is not required to update the kinetic unknown $F^{n+1}$ in \eqref{ugks_new}. Indeed, in this version of UGKS, 
 $F^{n+1}$ is updated from \eqref{ugks_new} and then, 
 $\rho^{n+1}$ is computed from \eqref{new_ugks_rho}. 
 \item Solving  $\eqref{ugks_new}$ requires a linear solver (Conjugate Gradient method in this paper) in each spatial cell if $D$ has no clear properties. When $D$ comes from BGK model, the trick is to update $\rho$  through \eqref{new_ugks_rho}: the system becomes diagonal. When $D$ is linked to FP model, one has  to use Thomas algorithm for tridiagonal matrices for more efficiency and  precision (the matrix is badly conditioned for Conjugate Gradient method). 
\end{itemize}

\end{remark}    
 
    \section{Asymptotic behavior}
 Here, we formally  investigate the asymptotic behavior of the new UGKS \eqref{ugks_new}-\eqref{flux_ugks}-\eqref{new_ugks_rho}-\eqref{flux_ugks_rho} presented above. 
 First, we study the free transport regime $\varepsilon\to +\infty$ and then the diffusion limit $\eta=\varepsilon\to 0$ is studied. 
 
 \phantomsection
 
    \subsection{Free transport regime}
 The asymptotic behavior follows from the ones of the coefficients  defined in \eqref{coef_A}, \eqref{coef_C}, \eqref{coef_D}. 
 
 \begin{proposition}
 When $\varepsilon\to +\infty$ (and $\eta$ constant), one has 
 \begin{itemize}
     \item \emph{A}$(\Delta t, \sigma, \eta, \varepsilon)$ tends to  $\frac{1}{\eta}$. 
     \item \emph{C}$(\Delta t, \sigma, \eta, \varepsilon)$ tends to $0$. 
     \item \emph{D}$(\Delta t, \sigma, \eta, \varepsilon)$ tends to $0$. 
 \end{itemize}
 \end{proposition}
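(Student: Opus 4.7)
The plan is to read off all three limits from a single Taylor expansion of the exponential $e^w$, where $w = \lambda_\star \sigma \Delta t/(\eta \varepsilon)$. Note that $\lambda_\star < 0$ and $\eta, \sigma, \Delta t$ are fixed, so as $\varepsilon \to +\infty$ we have $w \to 0^-$. This is the only small parameter in the expressions \eqref{coef_A}--\eqref{coef_D}, so the three asymptotics should all follow mechanically.

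First I would handle $\mathrm{A}$. Writing $e^w - 1 = w + w^2/2 + O(w^3)$, we get $(e^w - 1)/w = 1 + w/2 + O(w^2) \to 1$, hence $\mathrm{A} = \tfrac{1}{\eta w}(e^w-1) \to \tfrac{1}{\eta}$. Then $\mathrm{C} = \tfrac{1}{\eta} - \mathrm{A}$ is defined precisely as the defect, so it tends to $0$; a one-line consequence.

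The one delicate point is $\mathrm{D}$, and it is the only real obstacle. Indeed
\[
\mathrm{D} = \frac{\varepsilon}{\sigma\lambda_\star}(\mathrm{C}-\mathrm{A}) + \frac{\varepsilon}{\eta\sigma\lambda_\star} e^w
= \frac{\varepsilon}{\sigma\lambda_\star}\Bigl[\tfrac{1}{\eta} - 2\mathrm{A}\Bigr] + \frac{\varepsilon}{\eta\sigma\lambda_\star} e^w
\]
has a prefactor $\varepsilon$ which diverges, while the bracketed combinations individually tend to $-\tfrac{1}{\eta}$ and $\tfrac{1}{\eta}$ respectively; the naive limits therefore produce a $\infty - \infty$ cancellation and $\mathrm{D} \to 0$ can only be seen through a second-order expansion. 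I would therefore expand $e^w = 1 + w + w^2/2 + O(w^3)$ and $\mathrm{A} = \tfrac{1}{\eta}\bigl(1 + w/2 + w^2/6 + O(w^3)\bigr)$, so that $\tfrac{1}{\eta} - 2\mathrm{A} = -\tfrac{1}{\eta}\bigl(1 + w + w^2/3 + O(w^3)\bigr)$. Adding $e^w/\eta$ term-by-term, the constants and the linear-in-$w$ terms cancel exactly, leaving
\[
\tfrac{1}{\eta} - 2\mathrm{A} + \tfrac{1}{\eta}e^w = \tfrac{1}{\eta}\Bigl(\tfrac{w^2}{2} - \tfrac{w^2}{3}\Bigr) + O(w^3) = \tfrac{w^2}{6\eta} + O(w^3).
\]
Multiplying by $\varepsilon/(\sigma\lambda_\star)$ and using $w = \lambda_\star\sigma\Delta t/(\eta\varepsilon)$ yields $\mathrm{D} = \tfrac{\lambda_\star \sigma \Delta t^2}{6\eta^3 \varepsilon} + O(\varepsilon^{-2})$, which vanishes as $\varepsilon \to +\infty$.

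In summary, the whole proposition reduces to a Taylor expansion in $w$; the only nontrivial book-keeping is to push the expansion to second order for $\mathrm{D}$ in order to resolve the cancellation of the two $O(\varepsilon)$ contributions. The bound obtained for $\mathrm{D}$ even gives a rate $O(1/\varepsilon)$, which I would mention as a free bonus useful for later consistency arguments in the collisionless regime.
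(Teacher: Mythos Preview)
Your proof is correct. The paper does not spell out a proof of this proposition at all---it simply states the three limits and moves on to the consequence for the flux---so your Taylor expansion in $w$ is exactly the elementary computation the authors are tacitly relying on; your second-order expansion for $\mathrm{D}$ correctly resolves the $\infty-\infty$ cancellation and even yields the explicit rate $\mathrm{D}=O(1/\varepsilon)$, which is more than the paper records.
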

As a consequence, the flux \eqref{flux_ugks} verifies when $\varepsilon\to +\infty$ 
$$
\phi_{i+1/2,j} \to \frac{v_j}{\eta} \left(F_{i,j}^n \mathds{1}_{v_j > 0} + F_{i+1,j}^n \mathds{1}_{v_j < 0}\right), 
$$
and the scheme becomes 
$$
F^{n+1}_{i,j}-F^n_{i,j} +\frac{v_j\Delta t}{\eta\Delta x} \left(F_{i,j}^n \mathds{1}_{v_j > 0} + F_{i+1,j}^n \mathds{1}_{v_j < 0}\right)  = 0, 
$$
which is the standard first order upwind scheme for the free transport equation $\partial_t f+\frac{v}{\eta}\partial_x f=0$. 
 
    \subsection{Diffusion regime}
 We now investigate the diffusion limit $\eta=\varepsilon\to 0$. 
 \begin{proposition}
 When $\eta=\varepsilon\to 0$, one has 
 \begin{itemize}
     \item \emph{A}$(\Delta t, \sigma, \eta, \varepsilon)$ tends to $0$, 
     \item \emph{D}$(\Delta t, \sigma, \eta, \varepsilon)$ tends to $\frac{1}{\sigma \lambda_\star}$. 
 \end{itemize}
 \end{proposition}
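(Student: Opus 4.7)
The approach is direct: plug into the explicit formulas \eqref{coef_A}--\eqref{coef_D} and track two elementary asymptotics. Using the abbreviation $w=\lambda_\star \sigma \Delta t/(\eta\varepsilon)$, setting $\eta=\varepsilon$ yields $w = \lambda_\star \sigma \Delta t/\varepsilon^2$. Since $\lambda_\star<0$ (as noted in the remark after Proposition \ref{prop1}), we have $w\to -\infty$ as $\varepsilon\to 0$, so $e^w\to 0$ faster than any polynomial in $\varepsilon$. Simultaneously, $1/(\eta w) = \varepsilon/(\lambda_\star \sigma \Delta t)\to 0$. These two facts carry essentially the whole proof.

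For the coefficient A, I would simply observe that $A = (1/(\eta w))(e^w-1) = (\varepsilon/(\lambda_\star \sigma \Delta t))(e^w-1)$, and note that both $\varepsilon\to 0$ and $(e^w-1)\to -1$ force $A\to 0$ at rate $\varepsilon$.

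For D, the plan is to eliminate $C$ by substituting $C-A = 1/\eta - 2A$ into \eqref{coef_D}. A short algebraic rearrangement then gives $D = (\varepsilon/(\eta \sigma \lambda_\star))(1+e^w) - 2\varepsilon A/(\sigma \lambda_\star)$. With $\eta=\varepsilon$, the first term collapses to $(1+e^w)/(\sigma\lambda_\star)$, which tends to $1/(\sigma\lambda_\star)$ since $e^w\to 0$. The second term vanishes because $\varepsilon\to 0$ and $A\to 0$ (already proved). Assembling the two pieces yields the claim $D\to 1/(\sigma \lambda_\star)$.

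There is no real obstacle here: once one notices that the exponential $e^w$ is decaying (rather than blowing up), everything reduces to elementary arithmetic on limits. The only qualitative input needed is the sign of $\lambda_\star$, which is supplied by the negativity of $D$ established in Proposition \ref{prop1}; had this sign been reversed, $e^w$ would have blown up and the whole argument would collapse, so this is the one point worth emphasizing explicitly.
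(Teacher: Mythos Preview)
Your proof is correct and is precisely the direct verification the paper has in mind; indeed the paper states this proposition without proof, treating it as an immediate consequence of the explicit formulas \eqref{coef_A}--\eqref{coef_D} together with $\lambda_\star<0$. Your computation fills in exactly those details.
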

As a consequence, the macroscopic flux $\Phi_{i+1/2}$ defined by \eqref{flux_ugks_rho} satisfies,  as $\eta=\varepsilon\to 0$ 
\begin{eqnarray}
   \label{flux_rho_lim}
\Phi_{i+1/2} 
    &\to &  \frac{1}{2N}\frac{ \rho_{i+1}^n - \rho_i^n}{\Delta x} \frac{\langle V, V \rangle}{\sigma\lambda_\star}.  
\end{eqnarray}
Hence, the equation \eqref{new_ugks_rho} becomes 
$$
\frac{\rho^{n+1}_i-\rho^n_i}{\Delta t} + \frac{\langle V,V\rangle/(2N)}{\sigma \lambda_\star} \frac{\rho^n_{i+1}-\rho^n_{i}+\rho^n_{i-1}}{\Delta x^2}=0. 
$$
But from the definition \eqref{lambdastar} 
of $\lambda_\star$, we get 
$\langle V,V\rangle/\lambda_\star = \langle D^+ V,V\rangle$ which is consistent with the diffusion coefficient $\kappa$ occurring in  \eqref{eq:diffusion_limit}. 

    \section{Extensions and remarks}
    We now present some extensions and remarks.  
    
    \phantomsection
    
    \subsection{Implicit diffusion}
        
    The version where the limit scheme is implicit is obtained simply by modifying the flux as:
    \begin{eqnarray}
    \phi_{i+1/2,j} &=& \text{A}(\Delta t, \sigma, \eta, \varepsilon) \left(F_{i,j}^n \mathds{1}_{v_j > 0} + F_{i+1,j}^n \mathds{1}_{v_j < 0}\right) v_j \notag \\
    \label{flux_imp}
    &+& \text{C}(\Delta t, \sigma, \eta, \varepsilon) \left(\rho_i^{+,n} + \rho_{i+1}^{-,n}\right) v_j + \text{D}(\Delta t, \sigma, \eta, \varepsilon) \frac{\rho_{i+1}^{n+1} - \rho_{i}^{n+1}}{\Delta x} \lambda_\star U_j v_j.
    \end{eqnarray}
    However, to compute $\rho_{i}^{n+1}$, the macroscopic fluxes $\Phi_{i+1/2,j}$ are now obtained by summing on $j$ the microscopic fluxes \eqref{flux_imp}. From the previous calculations, we get 
\begin{eqnarray}
\Phi_{i+1/2} &=&    \frac{1}{2N} \sum_{j=1}^{2N} {\phi_{i+1/2,j} } \notag\\
  &=& \text{A}(\Delta t, \sigma, \eta, \varepsilon) (J_i^{+,n} + J_{i+1}^{-,n}) + \frac{\langle V, V \rangle}{2N} \text{D}(\Delta t, \sigma, \eta, \varepsilon) \frac{\rho_{i+1}^{n+1} - \rho_{i}^{n+1}}{\Delta x},
    \end{eqnarray}
    so that the scheme on $\rho$ finally becomes 
    (with the notations 
    $J_{i+1/2}^n = J_{i+1}^{-,n} + J_i^{+,n}$):
    \begin{eqnarray*}
        \frac{\rho_i^{n+1} - \rho_i^n}{\Delta t} &=& - \text{A}(\Delta t, \sigma, \eta, \varepsilon) \frac{J_{i+1/2}^n - J_{i-1/2}^n}{\Delta x} 
        - \frac{\langle V, V \rangle}{2N} \text{D}(\Delta t, \sigma, \eta, \varepsilon) \frac{\rho_{i+1}^{n+1} - 2\rho_i^{n+1} + \rho_{i-1}^{n+1}}{\Delta x^2},
    \end{eqnarray*}
    where A and D are given by \eqref{coef_A} and \eqref{coef_D}. 
    
    \begin{remark}
    Since we have an implicit diffusion in space, one needs first to solve  $\rho^{n+1}$ through a linear solver and then solve  $F^{n+1}$.
    \end{remark}

    \subsection{Some remarks for the BGK operator}
    In the BGK case, the computations simplify significantly. 
Indeed, first, all the eigenvalues $\lambda_k  (k\geq 1)$ are equal to $\lambda_\star = \textcolor{blue}{-}1$, and $\lambda_0=0$. Hence, the coefficients ${\cal A}_k$ defined in \eqref{defAk} becomes independent of $k$ and can be written as  
        $$
        {\cal A}_k=\exp\left({\frac{\lambda_\star \sigma (t - t_n)}{\eta \varepsilon}}\right) + \frac{\lambda_k}{\lambda_\star} \left(1 - \exp\left({\frac{\lambda_\star \sigma (t - t_n)}{\eta \varepsilon}}\right)\right)=1, 
        $$ 
        so that the inverse $M(t)^{-1}$ defined in \eqref{M-1} of $M(t)$ defined in \eqref{matrixM} becomes:
        \begin{eqnarray*}
        M(t)^{-1} &=& \exp\left(-{\frac{\lambda_\star \sigma (t - t_n)}{\eta \varepsilon}}\right) P_0 + \sum_{k=1}^{N'} 
        {\cal A}_k^{-1} P_k 
        = \exp\left(-{\frac{\lambda_\star \sigma (t - t_n)}{\eta \varepsilon}}\right) P_0 + (I - P_0).
        \end{eqnarray*}
     Besides, in the BGK case, one can observe that $D=P_0+\lambda_\star I$ or $D - \lambda_\star I = P_0$. It turns out that for the BGK case, the second reconstruction  \eqref{reconstruction} and the third reconstruction \eqref{third} gives  the same expression for $S(t)$ defined by \eqref{defS} and \eqref{defS2} respectively. Indeed, the reconstruction gives \eqref{reconstruction} 
        \begin{eqnarray}
        S_j(t) &=& \exp\left({\frac{\lambda_\star \sigma (t - t_n)}{\eta \varepsilon}}\right) \left(F_{i,j}^n \mathds{1}_{v_j > 0} + F_{i+1,j}^n \mathds{1}_{v_j < 0}\right) \nonumber\\
        &+& \langle v_j e_j, \int_{0}^{\frac{\sigma t}{\eta\varepsilon}} \exp\left({\lambda_\star u}\right) P_0 \left[-\frac{\varepsilon u}{\sigma} \frac{F_{i+1}^n - F_i^n}{\Delta x}\right] du \rangle \nonumber\\
        \label{S_BGK}
        &=& \exp\left({\frac{\lambda_\star \sigma (t - t_n)}{\eta \varepsilon}}\right) \left(F_{i,j}^n \mathds{1}_{v_j > 0} + F_{i+1,j}^n \mathds{1}_{v_j < 0}\right) + \frac{\mathcal{C}(t) v_j}{\lambda_\star} \frac{\varepsilon}{\sigma} \frac{ \rho_{i+1}^n - \rho_i^n}{\Delta x},
        \end{eqnarray}
        which is the expression that we would have obtained using the third reconstruction \eqref{third} (let  recall the definition \eqref{defC} of $\mathcal{C}(t)$).  
       We can observe that the second term in \eqref{S_BGK} is collinear to $V$, ie orthogonal to Ker $D$ or $P_0 V=0$. Solving the system $M(t)F_{i+1/2}(t) = S(t)$ thus leads  to:
        \begin{eqnarray*}
        F_{i+1/2}(t) &=& M(t)^{-1} S(t) \\
        &&\hspace{-1.5cm}= P_0 \left\{F_{i,j}^n \mathds{1}_{v_j > 0} + F_{i+1,j}^n \mathds{1}_{v_j < 0}\right\}_{j=1, \dots, 2N} + {\bf 0} \\
        &&\hspace{-1.5cm}+ (I - P_0) \exp\left({\frac{\lambda_\star \sigma (t - t_n)}{\eta \varepsilon}}\right) \left\{F_{i,j}^n \mathds{1}_{v_j > 0} + F_{i+1,j}^n \mathds{1}_{v_j < 0}\right\}_{j=1, \dots, 2N} + \frac{\varepsilon}{\sigma} \frac{ \rho_{i+1}^n - \rho_i^n}{\Delta x} \frac{\mathcal{C}(t) V}{\lambda_\star} \\
        &&\hspace{-1.5cm} = \left(1 - \exp\left({\frac{\lambda_\star \sigma (t - t_n)}{\eta \varepsilon}}\right)\right) \left(\rho_i^{+,n} + \rho_{{i+1}}^{-,n}\right) \bigbold{1} \\
        &&\hspace{-1.5cm}+ \exp\left({\frac{\lambda_\star \sigma (t - t_n)}{\eta \varepsilon}}\right) \left\{F_{i,j}^n \mathds{1}_{v_j > 0} + F_{i+1,j}^n \mathds{1}_{v_j < 0}\right\}_{j=1, \dots, 2N} + \frac{\varepsilon}{\sigma} \frac{ \rho_{i+1}^n - \rho_i^n}{\Delta x} \frac{\mathcal{C}(t) V}{\lambda_\star}. 
        \end{eqnarray*}
We observe that this expression and the one obtained in \cite{mieussens2013} match except for the last term 
in which half-slopes reconstructions have been used for $\rho$ in \cite{mieussens2013}. This is due to the fact that in \cite{mieussens2013}, the BGK model requires a reconstruction for $F$ and for $\rho$ which is not the case in this work. 


    \subsection{Some remarks for the Fokker-Planck operator} \label{subsecfp}
    To apply our scheme to  Fokker-Planck operator, we need to give a discretization of ${\cal D}$. Moreover, we  discuss how the interface value $F_{i+1/2}(t)$ behave in this case. 
    
    \phantomsection
    
    \subsubsection{Discretization of the collision operator}
        First, let recall that for the continuous Fokker-Planck operator defined by \eqref{eq:collision_operator_diff}, the eigenvalues are $\lambda_k=-k(k+1), k\in \mathbb{N}$. The zero eigenvalue is labelled by $k=0$ and the eigenvalue $\lambda_1$ is associated with the eigenfunction $v$ (which can easily recovered from    \eqref{eq:collision_operator_diff}). Let us remark that we will choose $\lambda_\star=\lambda_1$ since this is the eigenvalue which enables to get the correct diffusion limit. 
        
        Second, let introduce a velocity discretization  $D$ of ${\cal D}$  given by \eqref{eq:collision_operator_diff}. With the definitions introduced above, we use  {(we recall that we have chosen $c=1$ )}
        \begin{equation}
        \label{discrete_FP}
        ({\cal D}f)(v_j)\approx (DF)_j = \frac{1}{\Delta v^2}\Big((1-v_{j+1/2}^2) (F_{j+1}-F_j) - (1-v_{j-1/2}^2) (F_{j}-F_{j-1})\Big), 
        \end{equation}
        where we defined $v_{j+1/2}=(v_j+v_{j+1})/2$. 
        Using this approximation, one can check that 
        the first two eigenvalues $\lambda_0=0$ and $\lambda_1=-2$ are exactly recovered.
        \begin{proposition}
        \label{prop2}
        The discretization \eqref{discrete_FP} satisfies  $D{\bf 1}=0$ and $DV=-2V$. Besides $D$ is symmetric, its non diagonal coefficients are positive and $I+\delta D$ is  bistochastic  for $\delta>0$ small enough.
        \end{proposition}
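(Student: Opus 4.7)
My plan is to verify the four claims by direct computation from the finite-difference stencil \eqref{discrete_FP}, after fixing the correct boundary convention. The grid $v_j=-1+\tfrac{\Delta v}{2}+(j-1)\Delta v$ yields $v_{1/2}=-1$ and $v_{2N+1/2}=1$, so the coefficients $1-v_{1/2}^2$ and $1-v_{2N+1/2}^2$ vanish automatically, encoding the natural no-flux boundary condition of the continuous Fokker-Planck operator and making \eqref{discrete_FP} well-defined without ghost cells. Once this is observed, $D\mathbf{1}=0$ is immediate since both differences in \eqref{discrete_FP} vanish for $F_j\equiv 1$.

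For $DV=-2V$, I would substitute $F_j=v_j$, use $F_{j+1}-F_j=F_j-F_{j-1}=\Delta v$, factor the difference of squares and simplify via $v_{j+1/2}+v_{j-1/2}=2v_j$; this gives $(DV)_j=-2v_j$ in the interior, and the vanishing boundary coefficients extend the identity to $j=1,2N$. Symmetry and the sign of the off-diagonal entries follow by inspection of the tridiagonal structure: $D_{j,j-1}=D_{j-1,j}=(1-v_{j-1/2}^2)/\Delta v^2\ge 0$, with strict positivity for $j=2,\ldots,2N$ since $v_{j-1/2}\in(-1,1)$ in that range, while $D_{j,j}=-[(1-v_{j+1/2}^2)+(1-v_{j-1/2}^2)]/\Delta v^2\le 0$ is bounded in absolute value by $M:=2/\Delta v^2$.

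For the bistochasticity claim, I would choose $\delta_0=1/M$: for $0\le\delta\le\delta_0$, the matrix $I+\delta D$ has non-negative entries; its row sums equal $1$ by $D\mathbf{1}=0$, and its column sums equal $1$ by symmetry. Irreducibility is witnessed by the chain $k_0=1,k_1=2,\ldots,k_{2N-1}=2N$, since $D_{j,j-1}>0$ on the full subdiagonal, which matches the path condition required in the general hypotheses on $D$ stated earlier in the paper. The only delicate step I anticipate is the boundary bookkeeping at $j=1$ and $j=2N$, where one has to check that the vanishing of $1-v_{1/2}^2$ and $1-v_{2N+1/2}^2$ really does produce the expected one-sided fluxes consistent with symmetry and with $D\mathbf{1}=0$; once this is settled, the remaining verifications reduce to routine algebra.
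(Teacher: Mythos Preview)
Your proposal is correct and follows essentially the same route as the paper: the paper verifies $D\mathbf{1}=0$ from the conservative form, computes $(DV)_j=-2v_j$ by the identical substitution and difference-of-squares simplification, and then dismisses symmetry, sign of the off-diagonal entries, and bistochasticity as clear ``by construction''. You supply strictly more detail than the paper---the explicit boundary observation $v_{1/2}=-1$, $v_{2N+1/2}=1$, the explicit $\delta_0=\Delta v^2/2$, and the irreducibility chain---but none of this constitutes a different argument, only a fuller version of the same one.
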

        \begin{proof}   
         The fact that $\lambda_0=0$ is true is a direct consequence of the conservative form of the discretization \eqref{discrete_FP} 
        so that $D{\bf 1}=0$. 
        
        Regarding the relation, let replace $F_j$ by $v_j$ in \eqref{discrete_FP}: 
        \begin{eqnarray*}
        (D V)_j &=& \frac{1}{\Delta v^2}\Big((1-v_{j+1/2}^2) (v_{j+1}-v_j) - (1-v_{j-1/2}^2) (v_{j}-v_{j-1})\Big) \nonumber\\
        &=& \frac{1}{\Delta v^2}\Big((1-v_{j+1/2}^2) \Delta v - (1-v_{j-1/2}^2) \Delta v\Big) \nonumber\\
        &=& \frac{1}{\Delta v}(-v_{j+1/2}^2+v_{j-1/2}^2)  = -2 v_j, 
        \end{eqnarray*}
        so that $DV=-2V$ and $V$ is an  eigenvector associated to the eigenvalue $\lambda_\star=-2$.

        By construction, we clearly have that $D$ is symmetric, its non diagonal coefficients are positive and $I+\delta D$ is a bistochastic matrix for $\delta>0$ small enough.
                \end{proof}

  
  \subsubsection{Interface value}
In the Fokker-Planck case, the eigenstructure of $D$ is more involved than in the BGK case so that the third reconstruction \eqref{third} is required to avoid the knowledge of the projectors $P_k$. However, it can be noticed that for the second reconstruction \eqref{reconstruction}, the term $\mbox{\textcircled{b}}$ defined in \eqref{termb_1} vanishes. Indeed, thanks to Prop. \ref{prop2}, one has $(D-\lambda_\star I)V=0$, even if $F^n_{i,j}$ is employed for the gradient. 
    
    \subsection{Some remarks for the scattering operator} \label{subsecscat}
   
    For scattering operator defined by \eqref{eq:collision_operator_scat},
   the resulting matrix $D$ 
    must be symmetric negative with only $\bigbold{1}$ in the kernel and  positive coefficients apart on the diagonal. 
   Spectrum and projectors are not always exactly known, thus $\lambda_\star$ has to be numerically determined from  \eqref{lambdastar} and in this case  reconstruction \eqref{discrete3} is fully justified. 
    Up to our knowledge, there is no physically relevant non isotropic  1D collision   operator. Therefore, to test our approach, we  artificially define the following scattering matrix $D=\frac1{10}D_1$ with $D_1$  the periodic Laplacian matrix given by :
 $$
  {(D_1)}_{i,j} = \frac{1}{\Delta v^2}
  \begin{cases}
  \displaystyle -2, & \text{if } i = j, \\   \displaystyle 1 , & \text{if } i = j-1 [N_v] \mbox{ or }  i = j+1 [N_v] \quad (N_v=2N)
  \end{cases}
  $$
 which owns the required properties (negativity, kernel, positive coefficients).
   The factor $1/10$ is chosen so that the $\lambda_\star$ value in this case is between the one of BGK model and the one FP model. The eigenvalues of $D_1$  are known: \[
\lambda_k = -\frac{2}{\Delta v^2} \left(1 - \cos\left( \frac{2\pi k}{N_v} \right) \right), \quad k = 0, 1, \dots, N_v-1. 
\]

 \noindent  The numerical value for $\lambda_\star(D)$ is obtained by the code for $N_v=100$ is roughly $-1.49835$.
   \begin{remark} \
  \noindent    The vector $V$ is not an eigenvector since eigenvectors are discrete Fourier modes. Using Fourier analysis one gets that  when $\Delta v$ tends to zero:
 \begin{itemize}
 \item  {$D_1$ tends to the Laplacian so that $U$ obeys to $U''(v)=v \; (v\in [-1, 1]$) with periodic boundary conditions $U(1)=U(-1)$, $U$ and satisfies $\frac 12\int_{-1}^1 U(v) dv=0$  (to be in the orthogonal of  the constants). The solution $U$ is $U(v)=\frac16 v^3-\frac16v$. }
 
  \item One can compute the corresponding  pseudo-eigenvalue $\lambda_\star$ by considering the limit as $\Delta v\to 0$ in \eqref{lambdastar}. Using $\int_{-1}^1 U(v) v dv=-1/45$ and $\int_{-1}^1 v^2 dv=1/3$, one has 
 $$\lambda_\star(D_1): = \frac{\langle V, V \rangle}{\langle U, V \rangle} \underset{\Delta v \rightarrow 0}{\longrightarrow}\left.\left(\frac 12\int_{-1}^1 v^2dv\right)\right/\left(\frac 12\int_{-1}^1 U(v){v} dv\right)=-15. 
 $$
Since $D=\frac1{10}D_1$,  $\lambda_\star(D)$ converges to $\lambda_\star^{\infty}(D)=-1.5$ in the continuous limit.
 \end{itemize}
   \end{remark}

 \

    \section{Numerical results}

    We now provide  some numerical results to illustrate the properties of the new UGKS to solve \eqref{eq:boltzmann} for generalized collision kernels, namely the scattering operator  \eqref{eq:collision_operator_scat} and the Fokker-Planck operator  \eqref{eq:collision_operator_diff}. 
In all  cases  presented below, the equation \eqref{eq:boltzmann} is equipped with  periodic conditions in space (more details on other boundary conditions can be found in \cite{mieussens2013} and the new scheme only needs slight adaptations on the boundaries). The initial particle distribution function and density are chosen far from equilibrium:
\begin{eqnarray*}
f_0(x, v) &= &\exp\left( - (x - 0.5)^2 - 10(1 - v)^2 \right), \quad 0 \leq x \leq 1,\ -1 \leq v \leq 1. \\
\rho_0(x)&=&\frac12\int_{-1}^{1} f_0(x, v) \, dv = C\exp\left( - (x - 0.5)^2 \right), \\
\mbox{with  }\quad C&=&\frac12\int_{-1}^{1} \exp\left( -10 (1 - v)^2 \right) dv \approx 0.14  \qquad \mbox{ (see figure } \eqref{init_rho}).
\end{eqnarray*} 

\begin{figure}[H]
\label{init_rho}
  \centering
    \includegraphics[width=0.55\linewidth,  trim=2.8cm 1.38cm 3.6cm 1.28cm,  clip] {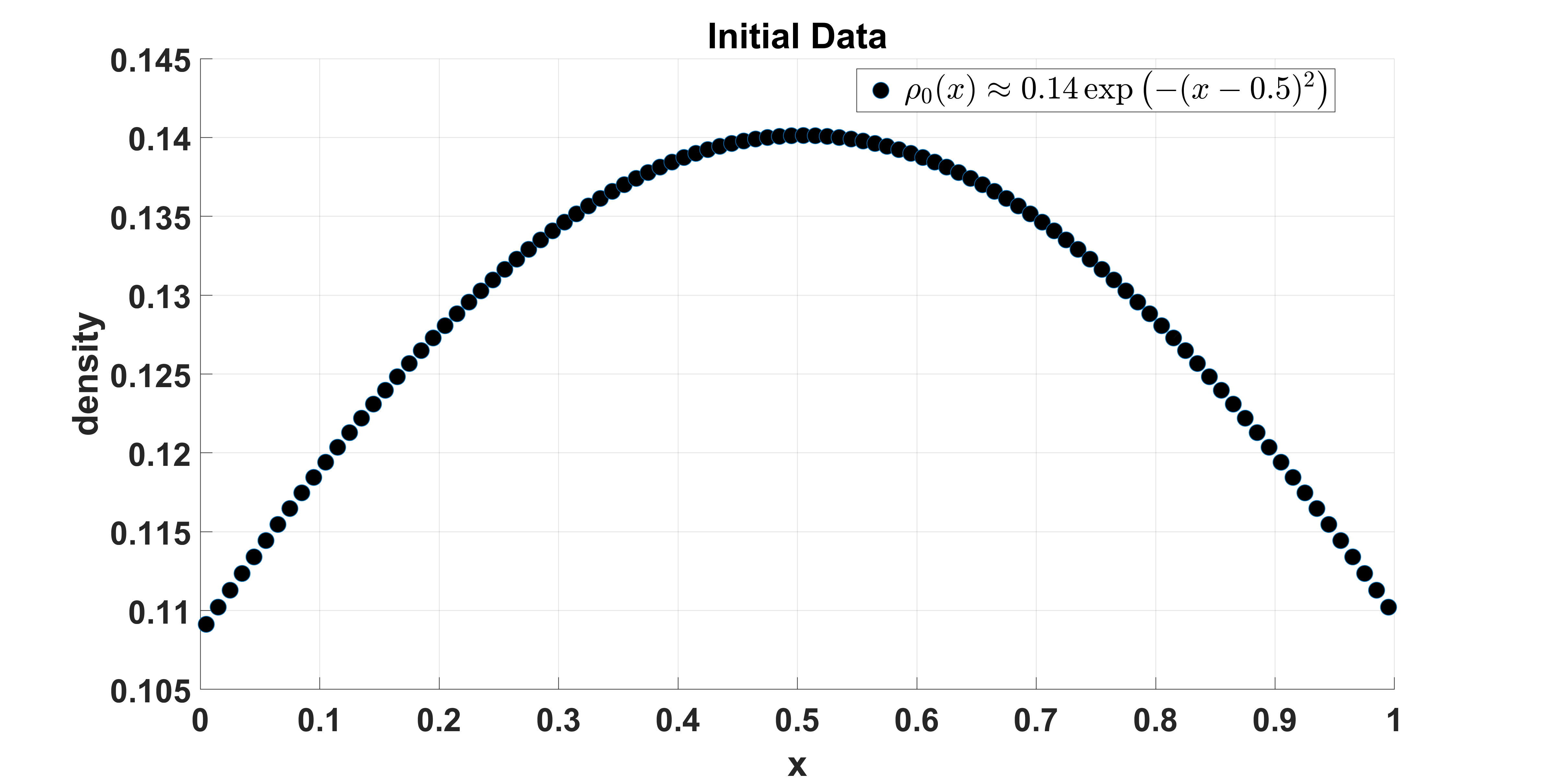}
    \caption{Initial density $x\mapsto \rho_0(x)$}
    \end{figure}
   The exact solution of the transport problem without any collision is
   \begin{equation} \label{reftransp}
       f(t, x, v) = f_0\big( (x - v t) \bmod 1, \, v \big) = \exp\left(- \big( (x - v t) \bmod 1 - 0.5 \big)^2 - 10 (1 - v)^2 \right), 
   \end{equation}
    while the exact solution for density in the diffusive limit with diffusion coefficient $\ds {\kappa} = \frac{1}{3\sigma \abs{\lambda_\star} }$ is:
    \begin{equation} \label{refdiff}
\rho(t,x) = C \int_0^1 \left( \sum_{j = -\infty}^{\infty} \frac{1}{\sqrt{4\pi{\kappa} t}} \exp\left( -\frac{(x - y + j)^2}{4 {\kappa} t} \right) \right) \exp\left( - (y - 0.5)^2 \right) dy.
 \end{equation}
 .

In the sequel, we  focus on the comparison of different collision operators and their corresponding  diffusion coefficients $\kappa$: the scattering operator  \eqref{eq:collision_operator_scat} for which $\kappa=\frac{2}{9\sigma}$ (since $\lambda_\star=-3/2$), the Fokker-Planck operator  \eqref{eq:collision_operator_diff}  for which $\kappa=\frac{1}{6\sigma}$ (since $\lambda_\star=-2$) and BGK operator for which $\kappa=\frac{1}{3\sigma}$ (since $\lambda_\star=-1$).
Regarding the stability, it is empirically found that the new UGKS has the same stability condition as the standard one studied in \cite{mieussens2013, vigier}.  
The time step is thus chosen 
so that the natural condition $\Delta t=C_1\Delta x^2+C_2\eta\Delta x$ is empirically satisfied. We refer to the work of Vigier (\cite{vigier}) for more discussions on the subject.
     
In the following tests, the mesh in space uses $N_x=100$ cells whereas $N_v=100$ cells in velocity ($N=50$) are considered. 
The quantity $\sigma$ will always be set to one. Hence, from the empirical CFL condition, the time step is set to $\Delta t=10^{-5}$. 

The next subsections investigate the  three different regimes (transport, intermediate and diffusive) for the three models at different times. 
 In the legends BGK means BGK model, FP means Fokker-Planck model and SC refers to the scattering model.

    \subsection{Transport regime: $\eta=1$ and $\varepsilon=100$}
    First, we consider the so-called transport regime 
    in \eqref{eq:boltzmann} with $\eta=1$ and $\varepsilon=100$. In this regime, the collision part is weakened due to the large value of $\varepsilon$ and we capture the transport of the density: initially the mean velocity of the distribution is uniform  and strictly positive so that a translation of the bump to the right is expected. In Figure \ref{fig1}, we can observe that due to the first order scheme in transport, diffusion in space  and smoothing of the bump occur.
    In this regime, there is almost no collisions so that consequently the three kernels of collisions  give the same results at intermediate time $t_i=0.05$ and final time $t_f=0.1$.

    \begin{figure}[H]
  \centering
    \includegraphics[width=0.9\linewidth,  trim=2cm 0.25cm 3.75cm 0.5cm,  clip] {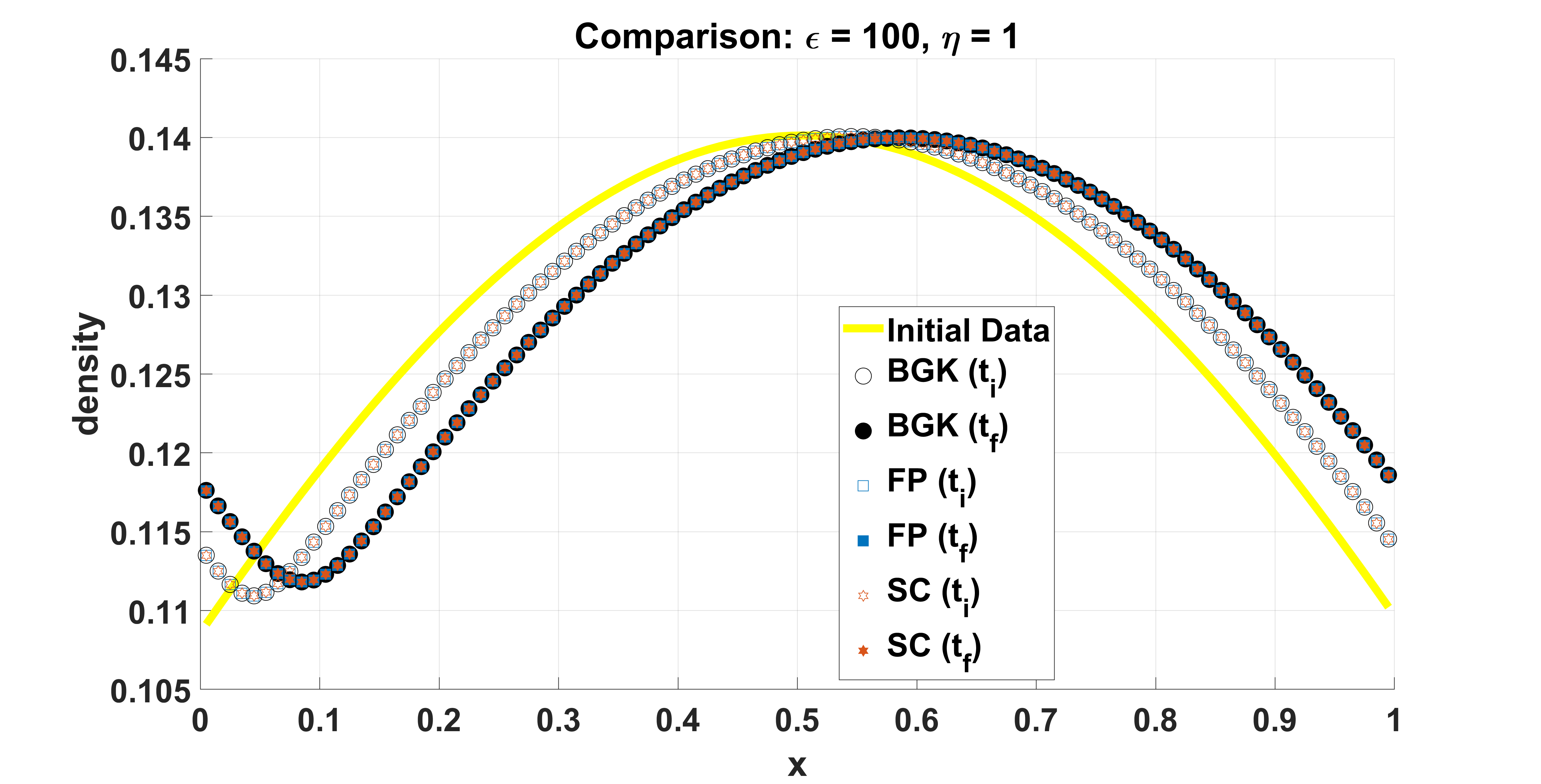}
    \caption{Transport regime ($\varepsilon=100, \eta=1$): density $x\mapsto \rho(t,x)$ for $t_i=0.05, t_f=0.1$ computed by UGKS for BGK, Fokker-Planck and scattering collision kernels. }
    \label{fig1}
    \end{figure}
    
    We can also compare with exact solution of the transport solution given  by \eqref{reftransp} and  denoted by $\mbox{Transp}$ at intermediate time and final time: we can see in Figure \ref{fig2} that the three models have exactly the same behavior
    and that the error comes from the first order upwind scheme.
       \begin{figure}[H]
  \centering
    \includegraphics[width=0.75\linewidth,  trim=2cm 0.25cm 3.cm 0.5cm,  clip] {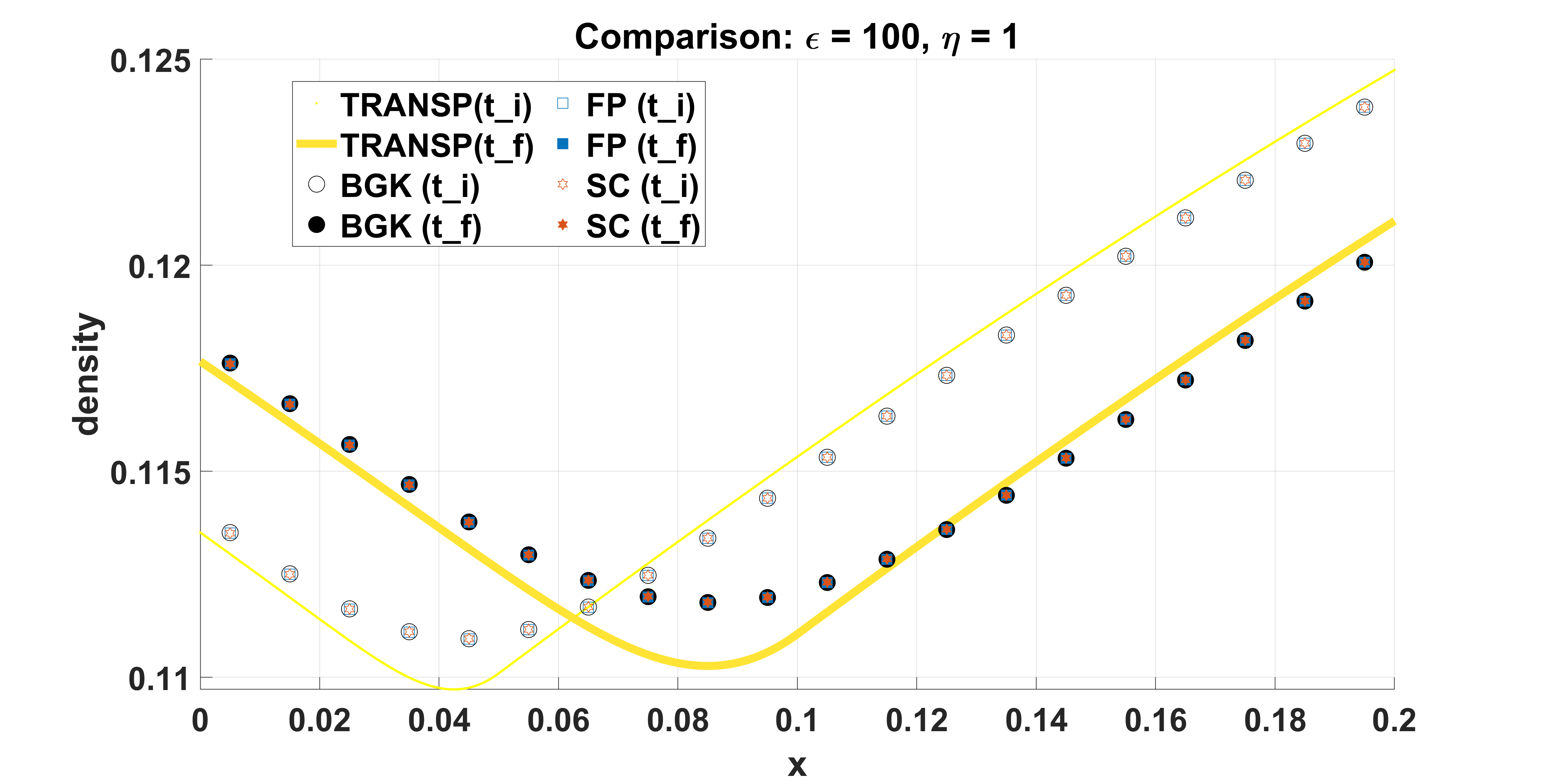}
    \caption{Transport regime ($\varepsilon=100, \eta=1$): density $x\mapsto \rho(t,x)$ for $t_i=0.05, t_f=0.1$ computed by UGKS for BGK, Fokker-Planck and scattering collision kernels compared with the exact solution. }
      \label{fig2}
    \end{figure}

    \subsection{Intermediate regime: $\eta=0.1$ and $\varepsilon=0.1$}
Now, we investigate the intermediate regime for which $\eta=\varepsilon=0.1$ in \eqref{eq:boltzmann}. In this regime both transport and collisions  are acting: transport is initially acting (the bump has stopped moving at time $t_i=0.05$) and afterwards the bump is only damped till $t_f=0.1$. Consequently, 
due to the transport part which is the same for all the models, the bump goes to the right but due to the different diffusion coefficient, the damping is different according to the collision operator:
the diffusion coefficient of the Fokker-Planck model is half the one of BGK model and three quarters of the scattering model. Thus, one can observe on Figure  \ref{figinterm} that the BGK solution (black bullets) is more flattened than the one obtained by the scattering operator (orange stars) which is itself more flattened than the Fokker-Planck operator (blue squares) at time $t_f=0.1$. 
    \begin{figure}[H]
  \centering
         \includegraphics[width=0.9\linewidth,  trim=2cm 0.25cm 3.75cm 0.5cm,  clip]{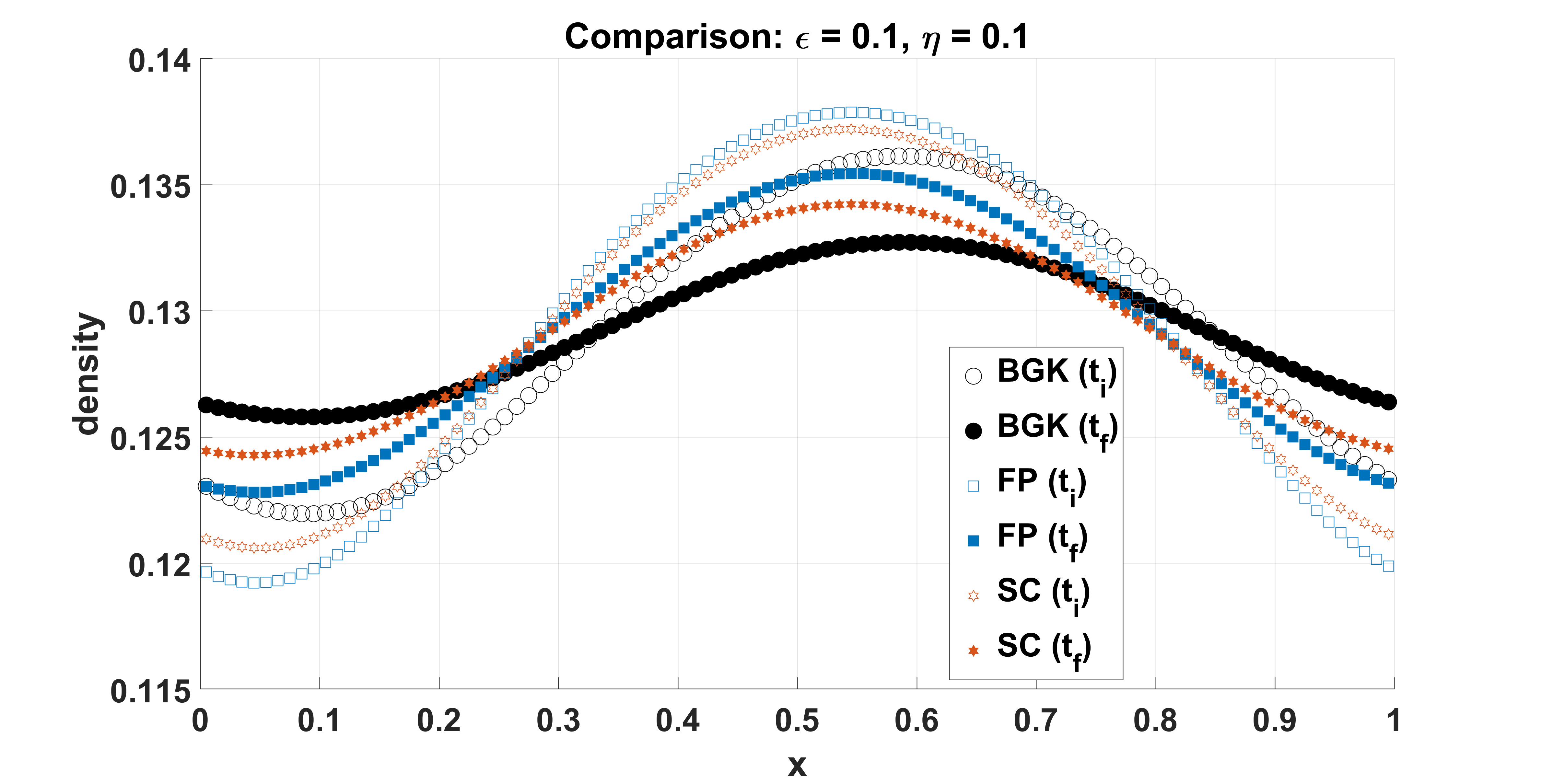}
     \caption{Intermediate regime ($\varepsilon=\eta=0.1$): density $x\mapsto \rho(t,x)$ for $t_i=0.05, t_f=0.1$ computed by UGKS for BGK, Fokker-Planck and scattering collision kernels.}
      \label{figinterm}
   \end{figure}
   
    \subsection{Diffusive regime 
    $\eta=10^{-4}$ and $\varepsilon=10^{-4}$} 
    We finally consider the diffusion regime $\eta=\varepsilon=10^{-4}$. In this regime, the effect of the transport is negligible and 
    the solution is supposed to be closed to the one of the diffusion model \eqref{eq:diffusion_limit} for which the diffusion coefficient is different according to the collision operator. As a consequence, since the diffusion coefficient in the Fokker-Planck model is half the one of the BGK model, the Fokker-Planck model at time $t_f$ should be very close to the BGK model at time $t_i=t_f/2$. 
    This is observed in Figure \ref{figdiff} where the Fokker-Planck density (blue squares) is superimposed with the BGK one (black circles). Moreover, in Figure \ref{figdiff}, the densities obtained by the three collision operators at $t_i$ and $t_f$ are gradually damped from Fokker-Planck at $t_i$ to BGK at $t_f$. 

    \begin{figure}[H]
  \centering
    \includegraphics[width=0.9\linewidth,  trim=2cm 0.25cm 3.75cm 0.5cm,  clip]{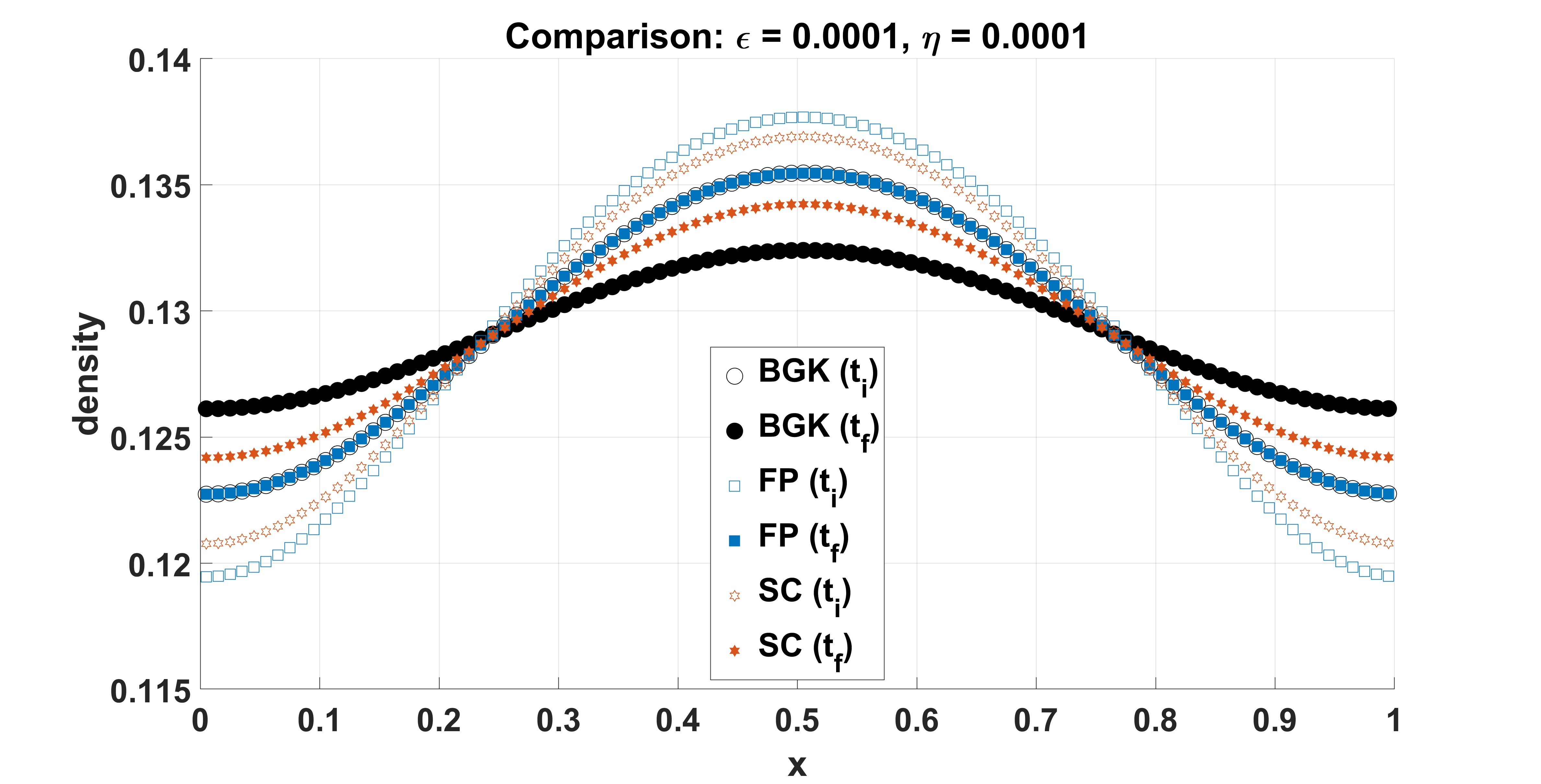}
    \caption{Diffusive regime ($\varepsilon=\eta=10^{-4}$): density $x\mapsto \rho(t,x)$ for $t=0.05$ and $t=0.1$ computed by UGKS for BGK, Fokker-Planck and scattering collision kernels.}
    \label{figdiff}
     \end{figure}
     
     Similarly, if we set $t_f=0.075$, the scattering model (orange stars) at time $t_f$ gives very similar result as the BGK model (black circles) at time $t_i=0.05$ as observed in Figure \ref{figdiff2}. Indeed, the diffusion coefficient for the scattering operator is $\lambda_{\star, SC}=-1.5$ whereas $\lambda_{\star, BGK}=-1$ for the BGK, so that       $\frac{t_f}{\lambda_{\star, SC}}=\frac{t_i}{\lambda_{\star, BGK}}$.  
         \begin{figure}[H]
  \centering
    \includegraphics[width=0.9\linewidth,  trim=2cm 0.25cm 3.75cm 0.5cm,  clip]{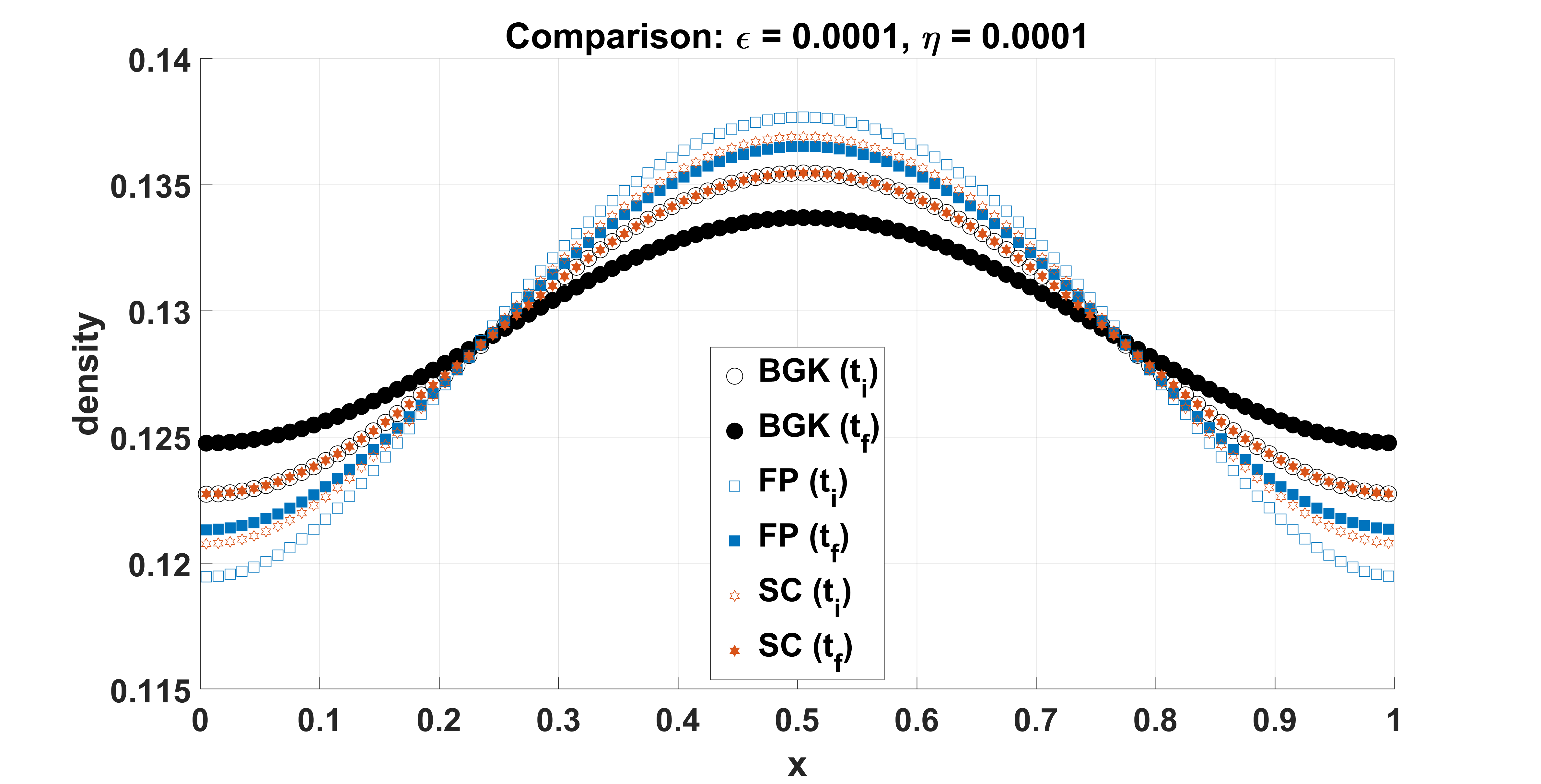}
    \caption{Diffusive regime ($\varepsilon=\eta=10^{-4}$): density $x\mapsto \rho(t,x)$ for $t=0.05$ and $t=0.075$ computed by UGKS for BGK, Fokker-Planck and scattering collision kernels.}
    \label{figdiff2}
     \end{figure}
     
     \subsubsection{Comparison with exact solution}

  We can also compare the exact solutions of the Fokker-Planck and scattering operators with their respective analytical solutions \eqref{refdiff}, denoted by $DIFF_{FP}$ and $DIFF_{SC}$, at the final time {$t=0.1$ }. In Figure~\ref{figcompdiff}, one can observe that the results are quite satisfactory after $10^4$ time steps, given the small number of velocity points and the use of a first-order time scheme. 

  \begin{figure}[H] \label{figcompdiff}
  \centering
    \includegraphics[width=0.9\linewidth,  trim=2cm 0.25cm 3.75cm 0.5cm,  clip]{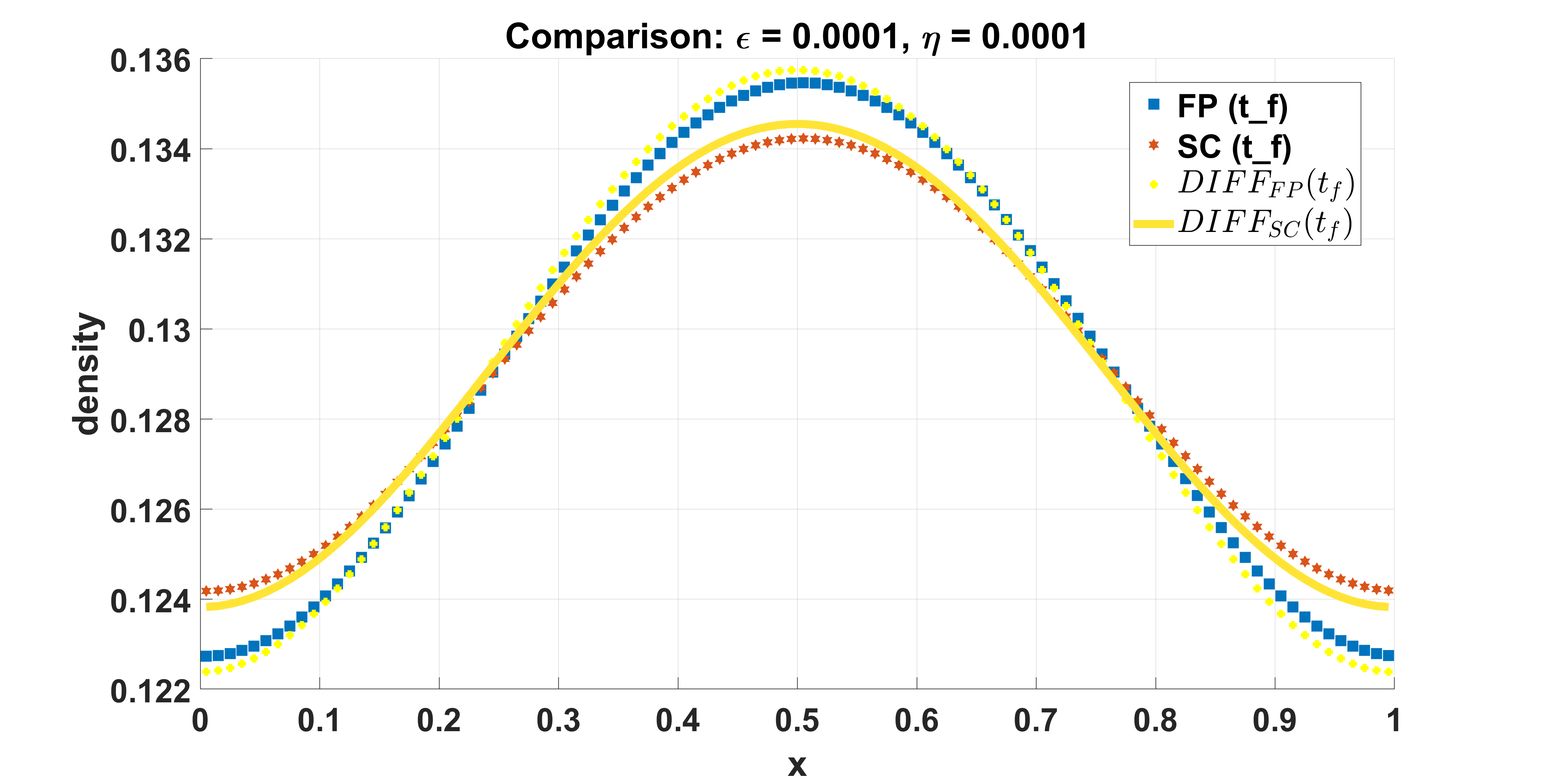}
    \caption{Diffusive regime ($\varepsilon=\eta=10^{-4}$): density $x\mapsto \rho(t,x)$ for $t=0.1$ computed by UGKS for Fokker-Planck and scattering collision kernels compared with their exact solutions.}
    
     \end{figure}     
     
    \section*{Conclusion}
 In this work, we have developed and analysed a generalized Unified Gas Kinetic Scheme (UGKS) designed to handle linear kinetic equations in the diffusive limit, with a particular focus on extending the original framework to accommodate a broad class of collision operators, such as the BGK, Fokker–Planck, and scattering models. Building on the asymptotic preserving (AP) structure of the classical UGKS, our formulation is based on a penalized Duhamel representation and a suitable spatial reconstruction  that preserve the essential asymptotic behaviors without relying on costly spectral decomposition.

From the theoretical standpoint, we formally established the preservation of the diffusion limit as $\varepsilon = \eta \to 0$. In particular, we showed that the scheme yields the appropriate diffusion equation  for each collision operator.
The practical effectiveness of the method was demonstrated through several numerical experiments covering a range of physical regimes. 

 The implicit treatment of the stiff collision term leads to local linear systems that are well-posed, and can be solved using standard solvers. The proposed interface value computation, essential for flux evaluations, preserves both accuracy and efficiency.

These results collectively demonstrate that the generalized UGKS scheme provides a  versatile numerical framework for simulating general kinetic models across multiple scales. As a future work, we may explore the extension of the scheme to nonlinear problems and rigorous stability analysis.

    \bibliographystyle{siamplain}
    \bibliography{references}

    \newpage

\end{document}